 \newtheorem{remark}{Remark}
 \newtheorem{prop}{Proposition}
 \newtheorem{theo}{Theorem}
 \newtheorem{lemma}{Lemma}
 \newtheorem{coro}{Corollary}
 \newtheorem{definition}{Definition}
 \newcommand{\tr}[1]{\textrm{Tr}\left(#1\right)}
 \newcommand{\mR}{{\mathbb{R}}}
 \newcommand{\mC}{{\mathbb{C}}}
 \newcommand{\real}[1]{{\mathfrak{R}}\left[#1\right]}
 \newcommand{\imag}[1]{{\mathfrak{I}}\left[#1\right]}
 \newcommand{\PP}[1]{{\mathbb{P}}\left\{ #1 \right\}}
 \newcommand{\bs}{\boldsymbol}
 \newcommand{\dsp}{\displaystyle}
\begin{document}

\title{Asymptotic Independence in the Spectrum of the Gaussian Unitary Ensemble}
\author{P. Bianchi, M. Debbah and J. Najim\\
\today
\thanks{This work was partially supported by the ``Agence Nationale de la Recherche'' via the program 
``Masses de donn{é}es et connaissances'', project SESAME and by the ``GDR ISIS'' via 
the program ``jeunes chercheurs''.}% <-this % stops a space
\thanks{P. Bianchi and M. Debbah are with SUPELEC, 
France. {\tt  \{pascal.bianchi,merouane.debbah\}@supelec.fr}\ ,}  
\thanks{M. Debbah holds the Alcatel-Lucent Chair on flexible radio,}
\thanks{J. Najim is with CNRS and Télécom ParisTech, Paris, France.
        {\tt najim@enst.fr}\ }
}
\maketitle

\bibliographystyle{plain}

\begin{abstract}
  Consider a $n× n$ matrix from the Gaussian Unitary Ensemble
  (GUE). Given a finite collection of bounded disjoint real Borel sets
  $(\Delta_{i,n},\ 1\leq i\leq p)$, properly rescaled, and eventually
  included in any neighbourhood of the support of Wigner's semi-circle
  law, we prove that the related counting measures $({\mathcal
    N}_n(\Delta_{i,n}),\ 1\leq i\leq p)$, where ${\mathcal N}_n(\Delta)$
  represents the number of eigenvalues within $\Delta$, are
  asymptotically independent as the size $n$ goes to infinity, $p$
  being fixed.
  
  As a consequence, we prove that the largest and smallest
  eigenvalues, properly centered and rescaled, are asymptotically
  independent; we finally describe the fluctuations of the condition
  number of a matrix from the GUE.

% fluctuations have been described by Tracy and Widom: $n^{\frac 23}
%   \left( \lambda_{\max} -2\right)$ converges in distribution toward
%   the so-called Tracy-Widom distribution as $n$ goes to infinity; a
%   similar result occurs for $n^{\frac
%     23}\left(\lambda_{\min}+2\right)$. In this article, we prove that
%   both random variables $n^{\frac 23} \left( \lambda_{\max} -2\right)$
%   and $n^{\frac 23}\left(\lambda_{\min}+2\right)$ are asymptotically
%   independent.
\end{abstract}

\section{Introduction and main result}

Denote by ${\cal H}_n$ the set of $n×n$ random Hermitian matrices endowed with the probability measure  
$$
P_n(d\,{\bf M}) := Z_n^{-1} \exp\left\{ -\frac{n}{2}\: \tr{{\bf M}^2}\right\} d\,{\bf M}\ ,
$$
where $Z_n$ is the normalization constant and where
$$
d\,{\bf M} = \prod_{i=1}^n d \,M_{ii} \prod_{1\leq i<j\leq n} \real{d\,M_{ij}}\prod_{1\leq i<j\leq n} \imag{d\,M_{ij}}
$$
for every ${\bf M}=(M_{ij})_{1\leq i,j\leq n}$ in ${\cal H}_n$
($\real{z}$ being the real part of $z\in \mathbb{C}$ and $\imag{z}$ its imaginary part). This set is
known as the Gaussian Unitary Ensemble (GUE) and corresponds to the
case where a $n× n$ hermitian matrix ${\bf M}$ has independent, complex,
zero mean, Gaussian distributed entries with variance
$\mathbb{E}|M_{ij}|^2=\frac{1}{n}$ above the diagonal while the
diagonal entries are independent real Gaussian with the same variance.
Much is known about the spectrum of ${\bf M}$. Denote by $
\lambda_1^{(n)}, \lambda_2^{(n)}, \cdots ,\lambda_n^{(n)} $ the
eigenvalues of ${\bf M}$ (all distinct with probability one), then:
\begin{itemize}
\item The joint probability density function of the (unordered) eigenvalues
  $(\lambda_1^{(n)}, \cdots, \lambda_n^{(n)})$ is given by:
$$
p_n(x_1,\cdots, x_n)= C_n e^{-\frac{\sum x_i^2}2} \prod_{j<k} |x_j -x_k|^2\ ,
$$
where $C_n$ is the normalization constant. 
\item \cite{Wig58} The empirical distribution of the eigenvalues $\frac 1n
  \sum_{i=1}^n \delta_{\lambda_i^{(n)}}$ ($\delta_x$ stands for the
  Dirac measure at point $x$) converges toward Wigner's semi-circle law
  whose density is:
$$
\frac{1}{2\pi} \boldsymbol{1}_{(-2,2)}(x) \sqrt{4-x^2} \ .
$$
\item \cite{BaiYin88} The largest eigenvalue $\lambda_{\max}^{(n)}$ (resp.  the smallest
  eigenvalue $\lambda_{\min}^{(n)}$) almost surely converges to $2$ (resp.
  $-2$), the right-end (resp. left-end) point of the support of the semi-circle law as
  $n\to\infty$.
\item \cite{TraWid94} The centered and rescaled quantity $n^{\frac 23}
  \left( \lambda^{(n)}_{\max} -2\right)$ converges in distribution toward
  Tracy-Widom distribution function $F^+_{GUE}$ which can be defined
  in the following way:
$$
F^+_{GUE}(s) =\exp\left( -\int_s^{\infty} (x-s)q^2(x)\,dx\right)\ , 
$$
where $q$ solves the Painlevé II differential equation:
$$
\begin{array}{l}
q''(x) = xq(x) + 2 q^3(x)\ ,\\
q(x) \sim \textrm{Ai}(x) \quad \textrm{as} \quad x\to \infty
\end{array}
$$
and Ai$(x)$ denotes the Airy function. In particular, $F_{GUE}^+$ is
continuous. Similarly, $n^{\frac 23} \left( \lambda^{(n)}_{\min} +2\right)
\xrightarrow{\mathcal D} F^-_{GUE}$ where $$F_{GUE}^-(s) =
1-F_{GUE}^+(-s)\ .$$
\end{itemize} 

If $\Delta$ is a Borel set in $\mathbb{R}$, denote by:
$$
{\mathcal N}_n(\Delta)=\# \left\{ \lambda^{(n)}_i \in \Delta\right\},
$$
i.e. the number of eigenvalues in the set $\Delta$. The following theorem
is the main result of the article. 

\begin{theo}\label{theo:main} Let ${\bf M}$ be a $n× n$ matrix from the GUE with eigenvalues
  $(\lambda_1^{(n)}, \cdots, \lambda_n^{(n)})$. Let $p\geq 2$ be an
  integer and let $(\mu_1,\cdots,\mu_p)\in \mathbb{R}^p$ be such that
  $ -2=\mu_1 <\mu_2<\cdots<\mu_p=2$.  Denote by ${\bs
    \Delta}=(\Delta_{1},\cdots, \Delta_{p})$ a collection of $p$
  bounded Borel sets in $\mR$ and consider ${\bs
    \Delta}_n=(\Delta_{1,n},\cdots, \Delta_{p,n})$ defined by the
  following scalings:
\begin{eqnarray*}
(edge)\qquad  \Delta_{1,n} &:=& -2 +\frac{\Delta_1}{n^{2/3}}\ ,\qquad \Delta_{p,n} \quad :=\quad 
2 +\frac{\Delta_p}{n^{2/3}}\ ,\\
(bulke)\qquad \Delta_{i,n} &:=& \mu_i +\frac{\Delta_i}{n}\ ,\qquad 2\leq i\leq p-1\ .
\end{eqnarray*}
Let $(\ell_1,\cdots,\ell_p)\in \mathbb{N}^p$, then:
$$
\lim_{n\to\infty} \left( \mathbb{P} \left( {\mathcal
      N}_n(\Delta_{1,n})=\ell_1,\cdots, {\mathcal
      N}_n(\Delta_{p,n})=\ell_p\right) - \prod_{k=1}^{p} \mathbb{P} \left( {\mathcal
      N}_n(\Delta_{k,n})=\ell_k\right)\right) = 0\ .
$$
\end{theo}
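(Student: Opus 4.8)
The plan is to exploit the fact that the eigenvalues of a GUE matrix form a determinantal point process. Denote by $K_n(x,y)=\sum_{k=0}^{n-1}\varphi_k(x)\varphi_k(y)$ the correlation kernel, where $\varphi_0,\ldots,\varphi_{n-1}$ are the first $n$ suitably rescaled Hermite functions, and by $\mathcal{K}_n$ the associated integral operator, i.e.\ the orthogonal projection of $L^2(\mathbb{R})$ onto their span; thus $\mathcal{K}_n$ has rank $n$ and $0\le\mathcal{K}_n\le I$. For $n$ large the sets $\Delta_{1,n},\ldots,\Delta_{p,n}$ are pairwise disjoint, each concentrating near its own point of $\{\mu_1<\cdots<\mu_p\}$; write $\mathbf{1}_k:=\mathbf{1}_{\Delta_{k,n}}$ and $\Omega_n:=\bigcup_k\Delta_{k,n}$. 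The standard generating-function identity for determinantal processes gives, for $(z_1,\ldots,z_p)\in\mathbb{C}^p$,
\[
\psi_n(z_1,\ldots,z_p):=\mathbb{E}\left[\prod_{k=1}^p z_k^{\mathcal{N}_n(\Delta_{k,n})}\right]=\det\!\big(I+S_n\big),\qquad S_n:=\sum_{k=1}^p(z_k-1)\,\mathbf{1}_k\,\mathcal{K}_n\,\mathbf{1}_{\Omega_n},
\]
the determinant and $S_n$ being taken on $L^2(\Omega_n)=\bigoplus_{k=1}^pL^2(\Delta_{k,n})$. Here $\psi_n$ is a polynomial in each $z_k$ (of degree at most $n$), and the $k$-th marginal generating function is $\psi_n^{(k)}(z)=\mathbb{E}[z^{\mathcal{N}_n(\Delta_{k,n})}]=\det_{L^2(\Delta_{k,n})}(I+(z-1)\mathcal{K}_n^{(kk)})$ with $\mathcal{K}_n^{(kk)}:=\mathbf{1}_k\mathcal{K}_n\mathbf{1}_k$.

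I would then split $S_n=D_n+E_n$, where $D_n:=\bigoplus_{k=1}^p(z_k-1)\mathcal{K}_n^{(kk)}$ is the block-diagonal part and $E_n$ collects the off-diagonal blocks $(z_i-1)\mathbf{1}_i\mathcal{K}_n\mathbf{1}_j$, $i\ne j$. Since $\det(I+D_n)=\prod_k\psi_n^{(k)}(z_k)$, and since reading off the coefficient of $\prod_k z_k^{\ell_k}$ yields $\mathbb{P}(\bigcap_k\{\mathcal{N}_n(\Delta_{k,n})=\ell_k\})$ from $\psi_n$ and $\prod_k\mathbb{P}(\mathcal{N}_n(\Delta_{k,n})=\ell_k)$ from $\prod_k\psi_n^{(k)}$, and since such a coefficient is bounded in modulus by the supremum of the polynomial over the torus $\{|z_1|=\cdots=|z_p|=1\}$, it suffices to prove
\[
\sup_{|z_1|=\cdots=|z_p|=1}\Big|\,\psi_n(z_1,\ldots,z_p)-\prod_{k=1}^p\psi_n^{(k)}(z_k)\,\Big|\xrightarrow[n\to\infty]{}0.
\]
The crucial observation is that $S_n$ and $D_n$ have the \emph{same} diagonal blocks, so $\mathrm{Tr}\,S_n=\mathrm{Tr}\,D_n$; writing $\det(I+T)={\det}_2(I+T)\,e^{\mathrm{Tr}\,T}$ for the Hilbert--Schmidt-regularised determinant, this gives
\[
\psi_n-\prod_k\psi_n^{(k)}=\big({\det}_2(I+S_n)-{\det}_2(I+D_n)\big)\,e^{\mathrm{Tr}\,D_n}.
\]
Since ${\det}_2$ is Lipschitz on Hilbert--Schmidt-bounded sets, $\big|\psi_n-\prod_k\psi_n^{(k)}\big|\le C_n\,\norm{E_n}_2$ with $C_n$ bounded as soon as $\norm{S_n}_2$, $\norm{D_n}_2$ and $|\mathrm{Tr}\,D_n|$ are. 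From $0\le\mathcal{K}_n^{(kk)}\le I$ one gets $\mathrm{Tr}\,(\mathcal{K}_n^{(kk)})^2\le\mathrm{Tr}\,\mathcal{K}_n^{(kk)}=\mathbb{E}[\mathcal{N}_n(\Delta_{k,n})]$, while $\norm{S_n}_2^2=\norm{D_n}_2^2+\norm{E_n}_2^2$; since $\mathbb{E}[\mathcal{N}_n(\Delta_{k,n})]$ converges (by the sine and Airy scaling limits of $K_n$ in the bulk and at the edges), all three quantities are bounded uniformly in $n$ and over the torus. All operators are finite rank, so no trace-class technicalities arise.

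What remains --- and what I expect to be the main obstacle --- is the estimate $\norm{E_n}_2\to0$, equivalently $\norm{\mathbf{1}_i\mathcal{K}_n\mathbf{1}_j}_2^2=\int_{\Delta_{i,n}}\int_{\Delta_{j,n}}|K_n(x,y)|^2\,dx\,dy\to0$ for each $i\ne j$; this is exactly where the edge and bulk scalings are used. By the Christoffel--Darboux formula, $K_n(x,y)=c_n\big(\varphi_n(x)\varphi_{n-1}(y)-\varphi_{n-1}(x)\varphi_n(y)\big)/(x-y)$ for an explicit $c_n>0$; for $x\in\Delta_{i,n}$ and $y\in\Delta_{j,n}$ with $i\ne j$ one has $|x-y|\ge\tfrac12|\mu_i-\mu_j|>0$ once $n$ is large, so the denominator causes no trouble, and the numerator is controlled by the classical sup-norm bounds on the rescaled Hermite functions --- these being of one order on compact subsets of $(-2,2)$ and of a larger order in the $O(n^{-2/3})$ windows about $\pm2$. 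Integrating these pointwise bounds over $\Delta_{i,n}\times\Delta_{j,n}$, whose sides are $O(n^{-1})$ in the bulk and $O(n^{-2/3})$ at the edge, a direct computation gives $\norm{\mathbf{1}_i\mathcal{K}_n\mathbf{1}_j}_2^2\to0$, the slowest decay (of order $n^{-2/3}$) occurring when $i$ and $j$ are both edge indices. This establishes the displayed uniform bound and hence the theorem. Note that the ``equal diagonal blocks'' remark together with the use of ${\det}_2$ is precisely what reduces the problem to this Hilbert--Schmidt (covariance-level) estimate and spares us any trace-norm control of the off-diagonal blocks, which would be appreciably harder at the edge.
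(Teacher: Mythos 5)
Your proof is correct, and it follows a genuinely different route from the paper. Both arguments start from the determinantal structure and the Fredholm-determinant representation of the joint generating function, and both ultimately rest on the same two facts about $K_n$ on disjoint rescaled windows: the diagonal control $\sup_{\Delta_{i,n}^2}|K_n| = O(n^{\kappa_i})$, and the cross-term control $\sup_{\Delta_{i,n}\times\Delta_{j,n}}|K_n| = O(n^{1-(\kappa_i+\kappa_j)/2})$ for $i\neq j$ coming from Christoffel--Darboux, the Hermite-function sup-norm bounds, and the separation $|\mu_i-\mu_j|>0$. What differs is the functional-analytic scaffolding on top. The paper parametrises by $\bs\lambda$ and a spectral variable $z$, writes $d_n(z,\bs\lambda)=D_n-\prod_i D_{i,n}$, proves that the logarithmic derivative $f_n = D_n'/D_n - \sum_i D_{i,n}'/D_{i,n}$ tends to zero on a small disc via explicit iterated-trace estimates (its Propositions \ref{prop:ray} and \ref{prop:estimees-M}), then invokes a normal-family argument to propagate the vanishing of $d_n$ to all of $\mathbb{C}$ and a further compact-convergence argument to transfer this to the partial derivatives in $\bs\lambda$ that appear in Proposition \ref{prop:representation}. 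You short-circuit most of this: the trace-matching observation $\mathrm{Tr}\,S_n=\mathrm{Tr}\,D_n$ together with $\det=\det_2\cdot e^{\mathrm{Tr}}$ isolates the off-diagonal blocks at once, the local Lipschitz bound for $\det_2$ in the Hilbert--Schmidt norm converts the problem into the single estimate $\|E_n\|_2\to 0$, and the Cauchy coefficient bound on the torus $\{|z_k|=1\}$ replaces the compact-convergence-of-derivatives step. Your route avoids the Montel/normal-family machinery entirely and requires only an $L^2$ (Hilbert--Schmidt) bound on the cross kernels rather than the iterated trace bounds; its cost is the reliance on the regularized determinant $\det_2$ and its Lipschitz estimate, which the paper's elementary power-series approach does not need. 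Both are correct; yours is shorter and more conceptual, the paper's is more self-contained.
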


Proof of Theorem \ref{theo:main} is postponed to Sections
\ref{sec:proof}. In Section \ref{sec:asymptotic}, we state and prove
the asymptotic independence of the random variables $n^{\frac
  23}\left(\lambda_{\min}^{(n)}+2\right)$ and $n^{\frac 23} \left(
  \lambda^{(n)}_{\max} -2\right)$, where $\lambda_{\min}^{(n)}$ and
$\lambda_{\max}^{(n)}$ are the smallest and largest eigenvalues of
${\bs M}$. We then describe the asymptotic fluctuations of the ratio
$\frac {\lambda^{(n)}_{\max}}{\lambda^{(n)}_{\min}}$.

\subsection*{Acknowlegment} The authors are grateful to Walid Hachem
for fruitful discussions and many helpful remarks; the authors wish
also to thank Eric Amar for useful references related to complex analysis.

\section{Asymptotic independence of extreme eigenvalues}
\label{sec:asymptotic}

In this section, we prove that the random variables $n^{\frac 23}
\left( \lambda_{\max}^{(n)} -2\right)$ and $n^{\frac
  23}\left(\lambda_{\min}^{(n)}+2\right)$ are asymptotically
independent as the size of matrix ${\bs M}$ goes to infinity. We then
apply this result to describe the fluctuations of
$\frac{\lambda_{\max}^{(n)}}{\lambda_{\min}^{(n)}}$.  In the sequel,
we drop the upperscript $^{(n)}$ to lighten the notations.

\subsection{Asymptotic independence}
Specifying $p=2$, $\mu_1=-2$, $\mu_2=2$ and getting rid of the
boundedness condition over $\Delta_1$ and $\Delta_2$ in Theorem
\ref{theo:main} yields the following:
\begin{coro}\label{coro:independence} Let ${\bs M}$ be a $n× n$ matrix from the GUE. 
  Denote by $\lambda_{\min}$ and $\lambda_{\max}$ its smallest and
  largest eigenvalues, then the following holds true:
\begin{multline*}
\mathbb{P}\left( 
n^{\frac
    23}\left(\lambda_{\min}+2\right)<x\ ,\  
n^{\frac
  23} \left( \lambda_{\max} -2\right)<y \right) \\
- \mathbb{P}\left(n^{\frac
    23}\left(\lambda_{\min}+2\right)<x \right) \mathbb{P}\left(n^{\frac
  23} \left( \lambda_{\max} -2\right)< y  \right)
\xrightarrow[n\to\infty]{} 0\ . 
\end{multline*}
Otherwise stated, 
$$
\left(n^{\frac 23} (\lambda_{\min}+2) ,n^{\frac 23}(\lambda_{\max}-2) \right) \xrightarrow[n\to \infty]{\mathcal D} (\lambda_-, \lambda_+), 
$$
where $\lambda_-$ and $\lambda_+$ are independent random
variables with distribution functions $F^-_{GUE}$ and $F^+_{GUE}$.
\end{coro}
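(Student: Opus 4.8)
The plan is to derive Corollary~\ref{coro:independence} from Theorem~\ref{theo:main} by specializing to $p=2$, $\mu_1=-2$, $\mu_2=2$, and choosing the bounded Borel sets $\Delta_1,\Delta_2$ to be half-lines truncated to a large box. Concretely, for the left edge I would take $\Delta_1 = (-A,x]$ and for the right edge $\Delta_2 = (-A,y]$ for a large constant $A>0$; then $\Delta_{1,n} = -2 + \Delta_1/n^{2/3} = (-2 - A n^{-2/3},\, -2 + x n^{-2/3}]$ and similarly $\Delta_{2,n} = (2 - A n^{-2/3},\, 2 + y n^{-2/3}]$. The key observation connecting counting functions to extreme eigenvalues is the elementary equivalence
\[
\{\mathcal N_n(\Delta_{2,n}) = 0\} = \{\lambda_{\max} \notin \Delta_{2,n}\},
\]
and since $\lambda_{\max}\to 2$ almost surely while $\lambda_{\max} > 2 - A n^{-2/3}$ with probability tending to one (by the Tracy--Widom convergence at the left, or simply by a.s.\ convergence for $A$ large relative to nothing — here we just need $\mathbb P(\lambda_{\max} \le 2 - An^{-2/3})\to 0$, which follows from $n^{2/3}(\lambda_{\max}-2)\xrightarrow{\mathcal D} F^+_{GUE}$), we get
\[
\mathbb P\big(n^{2/3}(\lambda_{\max}-2) < y\big) = \mathbb P(\mathcal N_n(\Delta_{2,n}) = 0) + o(1),
\]
and symmetrically $\mathbb P\big(n^{2/3}(\lambda_{\min}+2) < x\big) = \mathbb P(\lambda_{\min} < -2 + xn^{-2/3}) = 1 - \mathbb P(\mathcal N_n(\Delta_{1,n}) = 0) + o(1)$ — wait, more cleanly: $\{\lambda_{\min} \ge -2 + xn^{-2/3}\} = \{\mathcal N_n((-2-An^{-2/3}, -2+xn^{-2/3}]) = 0\}$ up to the negligible event $\{\lambda_{\min} \le -2 - An^{-2/3}\}$, so $\mathbb P(n^{2/3}(\lambda_{\min}+2) < x) = 1 - \mathbb P(\mathcal N_n(\Delta_{1,n}) = 0) + o(1)$.

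Next I would apply Theorem~\ref{theo:main} with $\ell_1 = \ell_2 = 0$ (note $p=2$ means there is no bulk term, so the boundedness hypothesis on $\Delta_1,\Delta_2$ is the only one that matters, and we have kept them bounded by the truncation at $-A$):
\[
\mathbb P\big(\mathcal N_n(\Delta_{1,n}) = 0,\ \mathcal N_n(\Delta_{2,n}) = 0\big) - \mathbb P(\mathcal N_n(\Delta_{1,n}) = 0)\,\mathbb P(\mathcal N_n(\Delta_{2,n}) = 0) \xrightarrow[n\to\infty]{} 0.
\]
Since $\{\mathcal N_n(\Delta_{1,n}) = 0,\ \mathcal N_n(\Delta_{2,n}) = 0\}$ coincides, up to the negligible events $\{\lambda_{\min}\le -2-An^{-2/3}\}\cup\{\lambda_{\max}\le 2-An^{-2/3}\}$, with $\{\lambda_{\min}\ge -2+xn^{-2/3},\ \lambda_{\max} < 2 + yn^{-2/3}\} = \{n^{2/3}(\lambda_{\min}+2)\ge x,\ n^{2/3}(\lambda_{\max}-2) < y\}$, I can translate the display into
\[
\mathbb P\big(n^{2/3}(\lambda_{\min}+2)\ge x,\ n^{2/3}(\lambda_{\max}-2) < y\big) - \mathbb P\big(n^{2/3}(\lambda_{\min}+2)\ge x\big)\,\mathbb P\big(n^{2/3}(\lambda_{\max}-2) < y\big) \to 0.
\]
Using $\mathbb P(n^{2/3}(\lambda_{\max}-2)<y) \to F^+_{GUE}(y)$ (continuous) and $\mathbb P(n^{2/3}(\lambda_{\min}+2)\ge x) \to 1 - F^-_{GUE}(x)$, and then rewriting the joint event via $\mathbb P(\lambda_{\min}+2 < x,\ \cdot) = \mathbb P(\cdot) - \mathbb P(\lambda_{\min}+2\ge x,\ \cdot)$, the asymptotic factorization for the ``$\ge$'' event transfers to the ``$<$'' event claimed in the corollary. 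The convergence in distribution to $(\lambda_-,\lambda_+)$ with independent Tracy--Widom marginals is then immediate from the factorized limit together with the one-dimensional Tracy--Widom limits cited in the introduction.

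The only genuine subtlety — and the step I would write out most carefully — is the removal of the boundedness hypothesis, i.e.\ justifying the truncation at $-A$. This is handled by a standard $\varepsilon$-$A$ argument: for any $\varepsilon>0$ choose $A$ so large that $\limsup_n \mathbb P(\lambda_{\min} \le -2 - An^{-2/3}) + \limsup_n \mathbb P(\lambda_{\max} \le 2 - An^{-2/3}) < \varepsilon$, which is possible since both sequences converge in distribution to proper (tight) limits $F^-_{GUE}$ and $F^+_{GUE}$; apply Theorem~\ref{theo:main} with the truncated sets to get factorization up to $o(1) + O(\varepsilon)$; then let $\varepsilon\to 0$. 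No new hard estimate is needed beyond the tightness already implied by the Tracy--Widom theorems quoted in the introduction, so I expect this corollary to be essentially a clean bookkeeping consequence of the main theorem rather than to require substantial new work.
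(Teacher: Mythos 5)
Your overall strategy is exactly the paper's: specialize Theorem~\ref{theo:main} to $p=2$ with $\mu_1=-2,\mu_2=2$, truncate the half-lines at a large parameter $A$ to get bounded sets, apply the theorem, and show the truncation error vanishes as $A\to\infty$ using the tightness of the Tracy--Widom limits. But the concrete choice of $\Delta_2$ is wrong, and the right-edge bookkeeping breaks down.

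With $\Delta_2=(-A,y]$ you get $\Delta_{2,n}=(2-An^{-2/3},\,2+yn^{-2/3}]$, which sits to the \emph{left} of $2+yn^{-2/3}$. Outside the event $\{\lambda_{\max}\le 2-An^{-2/3}\}$, having $\mathcal N_n(\Delta_{2,n})=0$ forces $\lambda_{\max}>2+yn^{-2/3}$, i.e.\ $n^{2/3}(\lambda_{\max}-2)>y$ --- the \emph{complement} of the event you are aiming for. Indeed your own two ingredients already give this: $\lambda_{\max}>2-An^{-2/3}$ with high probability together with $\lambda_{\max}\notin\Delta_{2,n}$ implies $\lambda_{\max}>2+yn^{-2/3}$, not $\lambda_{\max}<2+yn^{-2/3}$. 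So the line $\mathbb P(n^{2/3}(\lambda_{\max}-2)<y)=\mathbb P(\mathcal N_n(\Delta_{2,n})=0)+o(1)$ is false as written. What you need is $\Delta_2=(y,A)$, so that $\Delta_{2,n}=(2+yn^{-2/3},\,2+An^{-2/3})$ lies to the \emph{right} of $2+yn^{-2/3}$; then $\{\mathcal N_n(\Delta_{2,n})=0\}$ decomposes as $\{n^{2/3}(\lambda_{\max}-2)\le y\}$ disjoint-union a gap event contained in $\{n^{2/3}(\lambda_{\max}-2)>A\}$ (not $\le 2-An^{-2/3}$, which is what you wrote). This is precisely the paper's $\tilde\Pi(y,\alpha)$ event, with $\tilde\Pi(y,\alpha)\subset\{n^{2/3}(\lambda_{\max}-2)>\alpha\}$, whose probability tends to $1-F^+_{GUE}(\alpha)\to 0$ as $\alpha\to\infty$.

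One more, smaller, point: the stated ``key observation'' $\{\mathcal N_n(\Delta_{2,n})=0\}=\{\lambda_{\max}\notin\Delta_{2,n}\}$ is not a set identity for a bounded $\Delta_{2,n}$; only $\subset$ holds, and the discrepancy is exactly the gap event. You do (implicitly, and correctly) absorb the analogous gap at the left edge into $\{\lambda_{\min}\le -2-An^{-2/3}\}$, but you should state both edges with the same care --- the paper does this explicitly with $\Pi(-\alpha,x)$ and $\tilde\Pi(y,\alpha)$, and the disjoint decomposition of the joint event is where the bound $|\epsilon_n(\alpha)|\le 6\max(\mathbb P\{\Pi\},\mathbb P\{\tilde\Pi\})$ comes from.
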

\begin{proof} 
Denote by $(\lambda_{(i)})$ the ordered eigenvalues of
  ${\bs M}$: $\lambda_{\min}= \lambda_{(1)}\leq
  \lambda_{(2)}\leq\cdots\leq \lambda_{(n)}=\lambda_{\max}$.

 Let $(x,y)\in \mathbb{R}^2$ and take $\alpha \geq \max(|x|,|y|)$. Let $\Delta_1=(-\alpha,x)$
  and $\Delta_2=(y,\alpha)$ so that
$$
\Delta_{1,n} \ =\  \left( -2 -\frac{\alpha}{n^{\frac 23}}, -2 +\frac{x}{n^{\frac 23}} \right)\quad\textrm{and}\quad 
\Delta_{2,n} \ =\ \left( 2 +\frac{y}{n^{\frac 23}}, 2 +\frac{\alpha}{n^{\frac 23}} \right)\ .
$$
We have:
\begin{eqnarray}
\left\{ {\mathcal N}(\Delta_{1,n})=0 \right\} &=& \left\{n^{\frac
    23}(\lambda_{\min} +2)> x\right\}\cup \left\{ \exists
  i\in\{1,\cdots,n\};\ \lambda_{(i)} \leq -2-\frac \alpha{n^{\frac 23}}\
  ,\ \lambda_{(i+1)} \geq -2+\frac x{n^{\frac 23}} \right\}\nonumber \\
&:=& \left\{n^{\frac
    23}(\lambda_{\min} +2)> x\right\}\cup \ \{\, \Pi(-\alpha,x)\, \}\ , \label{delta1}
\end{eqnarray}
with the convention that if $i=n$, the condition simply becomes
$\lambda_{\max} \leq -2-\alpha{n^{-\frac 23}}$. Note that
both sets in the right-hand side of the equation are
disjoint. Similarly:
\begin{eqnarray}\nonumber
\left\{ {\mathcal N}(\Delta_{2,n})=0 \right\} &=& \left\{ n^{\frac
    23}(\lambda_{\max} -2)<y\right\}\cup 
\left\{ \exists
  i\in\{1,\cdots,n\};\ \lambda_{(i-1)} \leq 2+\frac y{n^{\frac 23}}\
  ,\ \lambda_{(i)} \geq 2+\frac \alpha{n^{\frac 23}} \right\}\ ,\\
&:=& \left\{ n^{\frac
    23}(\lambda_{\max} -2)<y\right\}\cup \{\, \tilde \Pi(y,\alpha)\,\} \label{delta2}\ ,
\end{eqnarray}
with the convention that if $i=1$, the condition simply becomes
$\lambda_{\min} \geq 2+\alpha{n^{-\frac 23}}$. Gathering the two previous
equalities enables to write $\{ {\mathcal
    N}(\Delta_{1,n})=0 , {\mathcal N}(\Delta_{2,n})=0 \}$ as the following
union of disjoint events:
\begin{multline}\label{delta12}
\left\{ {\mathcal
    N}(\Delta_{1,n})=0 \ ,\ {\mathcal N}(\Delta_{2,n})=0 \right\}\\
=\ \left\{ \Pi(-\alpha,x)\ ,\ n^{\frac 23}(\lambda_{\max} -2)<y\right\}
\cup \left\{ \Pi(-\alpha,x)\ ,\ \tilde \Pi(y,\alpha) \right\}
\cup \left\{ n^{\frac 23}(\lambda_{\min} +2)>x\ ,\tilde \Pi(y,\alpha) \right\}\\
\cup \left\{ n^{\frac 23}(\lambda_{\min} +2)>x\ ,\  n^{\frac 23}(\lambda_{\max} -2)<y\right\}\ . 
\end{multline} 
% Première proposition de preuve
Define:
\begin{eqnarray}
  u_n&:=& \mathbb{P}\left\{ n^{\frac 23}(\lambda_{\min} +2)>x\ ,\  n^{\frac 23}(\lambda_{\max} -2)<y\right\}
-\mathbb{P}\left\{ n^{\frac 23}(\lambda_{\min} +2)>x\right\}\mathbb{P}\left\{n^{\frac 23}(\lambda_{\max} -2)<y\right\}\nonumber\\
&=& \mathbb{P}\left\{ {\mathcal N}(\Delta_{1,n})=0 \ ,\ {\mathcal N}(\Delta_{2,n})=0\right\}
- \mathbb{P}\left\{ {\mathcal N}(\Delta_{1,n})=0 \right\}\mathbb{P}\left\{ {\mathcal N}(\Delta_{2,n})=0 \right\} + \epsilon_n(\alpha)\:,
\label{eq:def_un}
\end{eqnarray}
where, by equations~(\ref{delta1}), (\ref{delta2}) and~(\ref{delta12}),
\begin{multline*}
\epsilon_n(\alpha) := - \mathbb{P}\left\{ \Pi(-\alpha,x)\ ,\ n^{\frac 23}(\lambda_{\max} -2)<y\right\}
- \mathbb{P}\left\{ \Pi(-\alpha,x)\ ,\ \tilde \Pi(y,\alpha) \right\}\\
-\mathbb{P} \left\{ n^{\frac 23}(\lambda_{\min} +2)>x\ ,\tilde \Pi(y,\alpha) \right\}
+\mathbb{P}\left\{ {\mathcal N}(\Delta_{1,n})=0 \right\}\mathbb{P}\left\{ \tilde \Pi(y,\alpha)\right\}\\
+\mathbb{P}\left\{\Pi(-\alpha,x)\right\}\mathbb{P}\left\{ {\mathcal N}(\Delta_{2,n})=0 \right\}
-\mathbb{P}\left\{\Pi(-\alpha,x)\right\}\mathbb{P}\left\{ \tilde \Pi(y,\alpha)\right\}\:.
\end{multline*} 
Using the triangular inequality, we obtain:
$$
|\epsilon_n(\alpha)| \leq 6 \max\left( \mathbb{P}\left\{ \Pi(-\alpha,x)\right\} , \mathbb{P}\left\{ \tilde \Pi(y,\alpha)\right\}\right)
$$
As $\{\, \Pi(-\alpha,x)\, \}\subset\{n^{\frac 23}(\lambda_{\min}+2)<-\alpha\}$, we have
$$
\mathbb{P}\{ \Pi(-\alpha,x)\ \}\leq \mathbb{P}\{n^{\frac 23}(\lambda_{\min}+2)<-\alpha\}\xrightarrow[n\to\infty]{} F_{GUE}^-(-\alpha)
\xrightarrow[\alpha\to\infty]{} 0\ . 
$$
We can apply the same arguments to $\{\, \tilde \Pi(y,\alpha)\, \}\subset\{n^{\frac 23}(\lambda_{\max}-2)>\alpha\}$. We thus obtain:
\begin{equation}\label{eq:eps-to-zero}
\lim_{\alpha\to\infty}\limsup_{n\to\infty} |\epsilon_n(\alpha)| = 0\:.
\end{equation}
The difference $ \mathbb{P}\left\{ {\mathcal N}(\Delta_{1,n})=0 \ ,\ {\mathcal N}(\Delta_{2,n})=0\right\}
- \mathbb{P}\left\{ {\mathcal N}(\Delta_{1,n})=0 \right\}\mathbb{P}\left\{ {\mathcal N}(\Delta_{2,n})=0 \right\}$
in the right­hand side of~(\ref{eq:def_un}) converges to zero as $n\to\infty$ by Theorem~\ref{theo:main}.
We therefore obtain:
$$
\limsup_{n\to \infty} |u_n| = \limsup_{n\to\infty} |\epsilon_n(\alpha)|\:.
$$
The lefthand side of the above equation is a constant w.r.t. $\alpha$
while the second term (whose behaviour for small $\alpha$ is unknown)
converges to zero as $\alpha\to\infty$ by \eqref{eq:eps-to-zero}. Thus, $\lim_{n\to \infty}
u_n=0$. The mere definition of $u_n$ together with Tracy and Widom fluctuation results yields:
$$
\lim_{n\to\infty} \mathbb{P}\left\{ n^{\frac 23}(\lambda_{\min} +2)>x\ ,\  n^{\frac 23}(\lambda_{\max} -2)<y\right\}
=\left(1-F_{GUE}^-(x)\right) × F_{GUE}^+(y)\:.
$$
This completes the proof of Corollary~\ref{coro:independence}.

\end{proof}
\subsection{Application: Fluctuations of the condition number in the GUE}
As a simple consequence of Corollary \ref{coro:independence}, we can easily describe the fluctuations 
of the condition number $\frac{\lambda_{\max}}{\lambda_{\min}}$. 
\begin{coro}\label{coro:condition} Let ${\bs M}$ be a $n× n$ matrix from the GUE. 
  Denote by $\lambda_{\min}$ and $\lambda_{\max}$ its smallest and
  largest eigenvalues, then:
$$
n^{\frac 23} \left( \frac{\lambda_{\max}}{\lambda_{\min}} +1\right)
\xrightarrow[n\to\infty]{\mathcal D} -\frac 12 (\lambda_-+\lambda_+)\ ,
$$
where $\xrightarrow{\mathcal D}$ denotes convergence in distribution, $\lambda_-$ and $\lambda_+$
are independent random variable with respective distribution $F_{GUE}^-$ and $F_{GUE}^+$.
\end{coro}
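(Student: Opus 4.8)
The plan is to reduce the statement to Corollary~\ref{coro:independence} by an elementary algebraic rearrangement followed by Slutsky's lemma. Set
$$
X_n := n^{\frac 23}(\lambda_{\min}+2)\ ,\qquad Y_n := n^{\frac 23}(\lambda_{\max}-2)\ ,
$$
so that $\lambda_{\min} = -2 + n^{-\frac 23} X_n$ and $\lambda_{\max} = 2 + n^{-\frac 23} Y_n$. A direct computation then gives
$$
\frac{\lambda_{\max}}{\lambda_{\min}} + 1 \;=\; \frac{-\,n^{-\frac 23}(X_n+Y_n)}{\,2 - n^{-\frac 23}X_n\,}\ ,
$$
and hence
$$
n^{\frac 23}\left(\frac{\lambda_{\max}}{\lambda_{\min}} + 1\right) \;=\; \frac{-(X_n+Y_n)}{\,2 - n^{-\frac 23}X_n\,}\ .
$$
Since by \cite{BaiYin88} $\lambda_{\min}$ is bounded away from $0$ with probability tending to $1$, the left-hand side is well defined with overwhelming probability, which suffices for a statement about convergence in distribution.

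Next I would invoke Corollary~\ref{coro:independence}, which provides the genuinely \emph{joint} convergence $(X_n,Y_n) \xrightarrow[n\to\infty]{\mathcal D} (\lambda_-,\lambda_+)$, where $\lambda_-$ and $\lambda_+$ are independent with distribution functions $F_{GUE}^-$ and $F_{GUE}^+$. By the continuous mapping theorem applied to the addition map, $X_n+Y_n \xrightarrow[n\to\infty]{\mathcal D} \lambda_-+\lambda_+$. Moreover, being convergent in distribution, the sequence $X_n$ is tight, so that $n^{-\frac 23}X_n \xrightarrow[n\to\infty]{} 0$ in probability, and therefore the denominator satisfies $2 - n^{-\frac 23}X_n \xrightarrow[n\to\infty]{} 2$ in probability. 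Slutsky's lemma applied to the pair $\bigl(X_n+Y_n,\; 2 - n^{-\frac 23}X_n\bigr)$, together with the continuous mapping $(a,b)\mapsto -a/b$ near $b=2$, then yields
$$
n^{\frac 23}\left(\frac{\lambda_{\max}}{\lambda_{\min}} + 1\right) \;\xrightarrow[n\to\infty]{\mathcal D}\; \frac{-(\lambda_-+\lambda_+)}{2} \;=\; -\frac 12\,(\lambda_-+\lambda_+)\ ,
$$
which is the claim.

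The argument is essentially routine; the only point that deserves care is that one must use the true joint convergence of $(X_n,Y_n)$ supplied by Corollary~\ref{coro:independence} — the marginal Tracy--Widom limits of $X_n$ and $Y_n$ alone would not determine the law of the sum $\lambda_-+\lambda_+$ — and one must observe that the perturbation $n^{-\frac 23}X_n$ of the denominator is asymptotically negligible. No substantive obstacle is anticipated beyond this bookkeeping.
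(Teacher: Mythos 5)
Your proof is correct and follows essentially the same route as the paper's: both reduce the claim to the joint (Tracy--Widom) convergence of $\bigl(n^{2/3}(\lambda_{\min}+2),\, n^{2/3}(\lambda_{\max}-2)\bigr)$ provided by Corollary~\ref{coro:independence} and then dispose of the remaining factor via Slutsky's lemma. The only cosmetic difference is the algebraic bookkeeping — the paper splits $1/\lambda_{\min}$ additively as $-\tfrac12 + \tfrac{\lambda_{\min}+2}{2\lambda_{\min}}$ and shows the second piece is negligible, whereas you keep the quotient and show the denominator $2 - n^{-2/3}X_n$ converges to $2$ in probability — but these are the same argument.
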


\begin{proof}
The proof is a mere application of Slutsky's lemma (see for instance \cite[Lemma 2.8]{Van98}).
Write:
\begin{multline}\label{slutsky}
n^{\frac 23} \left( \frac{\lambda_{\max}}{\lambda_{\min}} +1\right)= -\frac 12 \left[
n^{\frac 23}(\lambda_{\max}-2) + n^{\frac 23}(\lambda_{\min}+2) \right]\\
+\frac{\lambda_{\min} +2}{2\lambda_{\min}}  \left[
n^{\frac 23}(\lambda_{\max}-2) + n^{\frac 23}(\lambda_{\min}+2) \right]\ .
\end{multline}
Now, $\frac{\lambda_{\min} +2}{2\lambda_{\min}} $ goes almost surely
to zero as $n\to\infty$, and $n^{\frac 23}(\lambda_{\max}-2) +
n^{\frac 23}(\lambda_{\min}+2)$ converges in distribution to the
convolution of $F_{GUE}^-$ and $F_{GUE}^+$ by Corollary \ref{coro:independence}. Thus, Slutsky's lemma implies that
$$
\frac{\lambda_{\min} +2}{2\lambda_{\min}}  \left[
n^{\frac 23}(\lambda_{\max}-2) + n^{\frac 23}(\lambda_{\min}+2) \right]\xrightarrow[n\to\infty]{\mathcal D} 0.
$$
Another application of Slutsky's lemma yields the convergence (in
distribution) of the right-hand side of \eqref{slutsky} to the limit
of $-\frac 12 \left[ n^{\frac 23}(\lambda_{\max}-2) + n^{\frac
    23}(\lambda_{\min}+2) \right]$, that is $-\frac 12 (X+Y)$ with $X$
and $Y$ independent and distributed according to $F_{GUE}^-$ and $F_{GUE}^+$. Proof of Corollary \ref{coro:condition}
is completed.
\end{proof}
\section{Proof of Theorem \ref{theo:main}}\label{sec:proof}
\subsection{Useful results}\label{sec:proof1}
\subsubsection{Kernels}
Let $\{ H_k(x)\}_{k\geq 0}$ be the classical Hermite polynomials
$H_k(x) := e^{x^2}\left(-\frac{d}{dx}\right)^k e^{-x^2}$ and consider
the function $\psi_k^{(n)}(x)$ defined for $0\leq k\leq n-1$ by:
$$
\psi_k^{(n)}(x) :=
\left(\frac{n}{2}\right)^{\frac{1}{4}}\frac{e^{-\frac{nx^2}{4}}}{(2^k
  k! \sqrt{\pi})^{\frac{1}{2}}} H_k\left(\sqrt{\frac n2} x\right)\ .
$$
Denote by $K_n(x,y)$ the following Kernel on $\mR^2$:
\begin{eqnarray}
  K_n(x,y) &:=& \sum_{k=0}^{n-1} \psi_k^{(n)}(x) \psi_k^{(n)}(y)\label{eq:kernelsum} \\
  &=& \frac{\psi_n^{(n)}(x)\psi_{n-1}^{(n)}(y)-\psi_n^{(n)}(y)\psi_{n-1}^{(n)}(x)}{x-y}
  \label{eq:kernel}
\end{eqnarray}
Equation \eqref{eq:kernel} is obtained from~(\ref{eq:kernelsum}) by
the Christoffel-Darboux formula. We recall the two well-known asymptotic results 
 \begin{prop}\label{prop:convergence-kernel}
   \begin{itemize}
   \item[a)] \emph{Bulk of the spectrum.} Let $\mu\in (-2,2)$.
     \begin{equation}
       \forall (x,y)\in\mR^2, \;\lim_{n\to\infty} \frac{1}{n} K_n\left(\mu + \frac{x}{n}, \mu + \frac{y}{n}\right) =
       \frac{\sin \pi\rho(\mu)(x-y)}{\pi(x-y)}\:,
       \label{eq:Kbulk}
     \end{equation}
     where $\rho(\mu)=\frac{\sqrt{4-\mu^2}}{2\pi}
$.
     Furthermore, the convergence~\eqref{eq:Kbulk} is uniform on every compact set of $\mR^2$.
   \item[b)] \emph{Edge of the spectrum.} 
     \begin{equation}
       \forall (x,y)\in\mR^2, \;\lim_{n\to\infty} \frac{1}{n^{2/3}} K_n\left(2 + \frac{x}{n^{2/3}}, 2 + \frac{y}{n^{2/3}}\right) =
       \frac{Ai(x)Ai'(y)-Ai(y)Ai'(x)}{x-y}\:,
       \label{eq:Kedge}
     \end{equation}
     where $Ai(x)$ is the Airy function. Furthermore, the convergence~\eqref{eq:Kedge} is uniform on every compact 
     set of $\mR^2$.
   \end{itemize}
 \end{prop}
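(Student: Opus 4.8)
The two limits are classical and I would present them as consequences of the Plancherel--Rotach asymptotics for the Hermite functions $\psi_n^{(n)},\psi_{n-1}^{(n)}$, substituted into the Christoffel--Darboux identity \eqref{eq:kernel}; the analytic heart --- the asymptotics of the $\psi_n^{(n)}$ themselves --- I would import from standard references (the monographs of Szeg\H{o}, of Mehta, of Deift, and Anderson--Guionnet--Zeitouni), the remaining work being essentially bookkeeping. The skeleton is: (i) record the local expansions of $\psi_n^{(n)}$ and $\psi_{n-1}^{(n)}$ in each regime, with uniformity on compacts and enough control on the phases and the first derivatives; (ii) plug them into \eqref{eq:kernel}; (iii) simplify, using elementary trigonometry in the bulk and the structure of the Airy function at the edge; (iv) handle the diagonal $x=y$ via the confluent ($y\to x$) form of \eqref{eq:kernel}, namely $K_n(x,x)=\psi_n^{(n)\prime}(x)\psi_{n-1}^{(n)}(x)-\psi_n^{(n)}(x)\psi_{n-1}^{(n)\prime}(x)$, and then deduce local uniformity.

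\emph{Bulk.} Fix $\mu\in(-2,2)$ and set $x_n=\mu+x/n$, $y_n=\mu+y/n$. The Plancherel--Rotach asymptotics in the oscillatory region give, uniformly for $(x,y)$ in a compact set, an expansion $\psi_n^{(n)}(x_n)=A_n\big[\cos(\theta_n+\pi\rho(\mu)x)+o(1)\big]$ with amplitude $A_n$ of order $n^{-1/2}$ and real phase $\theta_n$ (explicit, depending on $n$ and $\mu$, and growing linearly in $n$), together with the analogous expansion for $\psi_{n-1}^{(n)}(x_n)$ having the same leading amplitude and a shifted phase $\tilde\theta_n+\pi\rho(\mu)x$, the lag $\theta_n-\tilde\theta_n$ converging to a constant determined by $\mu$. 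Substituting into \eqref{eq:kernel} and expanding the products of cosines via $\cos a\cos b=\tfrac12[\cos(a-b)+\cos(a+b)]$, the $\cos(a+b)$ terms --- which carry the fast, $n$-dependent phase $\theta_n+\tilde\theta_n$ --- cancel, while the $\cos(a-b)$ terms combine into a multiple of $\sin(\theta_n-\tilde\theta_n)\,\sin(\pi\rho(\mu)(x-y))$; collecting the surviving constants (the limits of $nA_n^2$ and of $\sin(\theta_n-\tilde\theta_n)$) yields exactly $\frac1n K_n(x_n,y_n)\to\frac{\sin\pi\rho(\mu)(x-y)}{\pi(x-y)}$, which is \eqref{eq:Kbulk}.

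\emph{Edge.} Set $x_n=2+x/n^{2/3}$, $y_n=2+y/n^{2/3}$. Since $x=2$ is the turning point, the oscillatory regime degenerates and the Plancherel--Rotach asymptotics at the soft edge show that, after normalization by the appropriate power of $n$, both $\psi_n^{(n)}(x_n)$ and $\psi_{n-1}^{(n)}(x_n)$ converge (uniformly on compacts) to Airy-type profiles governed to leading order by $Ai(x)$, and --- crucially --- that the index shift $n\mapsto n-1$ acts to leading order like a derivative in the rescaled variable, so that the antisymmetrized combination appearing in \eqref{eq:kernel} picks up $Ai'$. Feeding this, at the appropriate order, into \eqref{eq:kernel} makes the numerator collapse to $Ai(x)Ai'(y)-Ai(y)Ai'(x)$ and the prefactor to $1/(x-y)$, giving $\frac1{n^{2/3}}K_n(x_n,y_n)\to\frac{Ai(x)Ai'(y)-Ai(y)Ai'(x)}{x-y}$, i.e.\ \eqref{eq:Kedge}. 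In both regimes the case $x=y$ is covered either by continuity of the limiting kernels or by passing to the confluent form of \eqref{eq:kernel} and reusing the asymptotics of $\psi_n^{(n)},\psi_{n-1}^{(n)}$ and their derivatives; the diagonal limits are $\rho(\mu)$ and $Ai'(x)^2-x\,Ai(x)^2$. Local uniformity of the convergence is inherited from the (locally uniform) Plancherel--Rotach asymptotics; alternatively, one notes from the same asymptotics that the rescaled kernels are uniformly Lipschitz on compacts and invokes Arzel\`a--Ascoli together with pointwise convergence.

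\emph{Main obstacle.} The only genuinely non-trivial ingredient is the Plancherel--Rotach asymptotics for $\psi_n^{(n)}$ and $\psi_{n-1}^{(n)}$, with uniformity on compacts and with control of the phases (bulk) and of the leading Airy profile together with its index shift and derivative (edge); this rests on a steepest-descent analysis of the contour-integral representation of the Hermite functions, or on the Riemann--Hilbert method, and I would quote it from the literature rather than reprove it. A fully self-contained alternative, which I would at least mention, bypasses the Hermite asymptotics altogether: applying the Deift--Zhou steepest descent directly to the double contour integral representation of $K_n(x,y)$ delivers the sine-kernel limit \eqref{eq:Kbulk} in the bulk and the Airy-kernel limit \eqref{eq:Kedge} at the edge, with local uniformity, in a single stroke.
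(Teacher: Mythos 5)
The paper does not actually prove Proposition~\ref{prop:convergence-kernel}: it is presented as a well-known result, and the proof (together with that of the subsequent $\psi_{n-k}^{(n)}$ estimates) is delegated to \cite[Chapter 7]{Pas07}. Your sketch goes the standard route --- Plancherel--Rotach asymptotics for $\psi_n^{(n)},\psi_{n-1}^{(n)}$ fed into the Christoffel--Darboux identity \eqref{eq:kernel} --- which is precisely what the cited reference (and most of the literature) does, so you are not taking a genuinely different path; you are merely drawing the citation boundary one step earlier (citing the Hermite-function asymptotics rather than the kernel limit). The bookkeeping you describe is correct: in the bulk, the product-to-sum identity kills the fast-phase terms and the slow-phase terms assemble into the sine kernel; at the edge, the leading Airy profiles of $\psi_n$ and $\psi_{n-1}$ coincide, so the numerator of \eqref{eq:kernel} would vanish to leading order, and it is precisely the $O(n^{-1/3})$ shift coming from $n\mapsto n-1$ that supplies the $Ai'$ term --- your observation that ``the index shift acts like a derivative in the rescaled variable'' is the right way to see this. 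The diagonal limits $\rho(\mu)$ and $Ai'(x)^2-x\,Ai(x)^2$ (using $Ai''=x\,Ai$) are also correct, and your remark on local uniformity (inherit from the locally uniform Plancherel--Rotach expansions, or Lipschitz plus Arzel\`a--Ascoli) is a sound way to close that gap. The alternative you mention --- Deift--Zhou steepest descent applied directly to the double contour representation of $K_n$ --- is a legitimate self-contained route, but again not what the paper uses, since the paper uses nothing at all here. In short: the sketch is correct and consistent with the reference the paper leans on; there is no gap, only the (acknowledged) reliance on the imported asymptotics, which is unavoidable at this level of detail.
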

We will need as well the following result on the asymptotic behavior of functions $\psi_k^{(n)}$.
\begin{prop}
Let $\mu\in (-2,2)$, let $k=0$ or $k=1$ and denote by $K$ a compact set of $\mR$.
   \begin{itemize}
   \item[a)] \emph{Bulk of the spectrum.} 
     There exists a constant $C$ such that for large $n$,
     \begin{equation}
     \label{eq:psibulk}
     \sup_{x\in K}\left|\psi_{n-k}^{(n)}\left(\mu+\frac{x}{n}\right)\right|\leq C\:.
     \end{equation}
   \item[b)] \emph{Edge of the spectrum.} 
     There exists a constant $C$ such that for large $n$,
     \begin{equation}
     \sup_{x\in K}\left|\psi_{n-k}^{(n)}\left(± 2± \frac{x}{n^{2/3}}\right)\right|\leq n^{1/6} C\:.
       \label{eq:psiedge}
     \end{equation}
   \end{itemize}  
\end{prop}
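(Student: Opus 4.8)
The plan is to undo the $n$-dependent rescaling and appeal to the classical Plancherel--Rotach asymptotics for Hermite polynomials. Introducing the standard $L^2(\mR)$-normalized Hermite functions $\varphi_k(t):=\frac{e^{-t^2/2}}{(2^kk!\sqrt\pi)^{1/2}}H_k(t)$, one has by construction
\[
\psi_k^{(n)}(x)=\left(\frac n2\right)^{1/4}\varphi_k\!\left(\sqrt{\tfrac n2}\,x\right)\,;
\]
moreover $H_k(-t)=(-1)^kH_k(t)$ gives $|\psi_k^{(n)}(-x)|=|\psi_k^{(n)}(x)|$, so in part~(b) the left edge $-2$ reduces to the right edge $+2$ and it is enough to estimate $\psi_{n-k}^{(n)}(2+x/n^{2/3})$. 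Everything then comes down to inserting the two scalings into the appropriate Plancherel--Rotach formula and checking that, uniformly for $x$ in the compact set $K$, the substitution lands inside the range of validity of that formula.

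For the bulk, with $\mu\in(-2,2)$ one has $\sqrt{n/2}\,(\mu+x/n)=\sqrt{2n}\,(\mu/2)+x/\sqrt{2n}$; writing this as $\sqrt{2(n-k)+1}\cos\phi_n$ gives $\cos\phi_n=\mu/2+O(1/n)$ uniformly in $x\in K$, so $\phi_n$ remains in a fixed compact subinterval of $(0,\pi)$ for $n$ large because $|\mu/2|<1$. The oscillatory Plancherel--Rotach formula, which is uniform in $\phi$ over such subintervals and in which $\varphi_{n-k}$ equals $n^{-1/4}$ times a bounded oscillating factor, then yields $\varphi_{n-k}\!\big(\sqrt{n/2}\,(\mu+x/n)\big)=O(n^{-1/4})$ uniformly in $x\in K$, and multiplying by $(n/2)^{1/4}$ gives \eqref{eq:psibulk}.

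For the edge, $\sqrt{n/2}\,(2+x/n^{2/3})=\sqrt{2n}+\frac{x}{\sqrt2}\,n^{-1/6}$, and since $\sqrt{2n}=\sqrt{2(n-k)+1}+O(n^{-1/2})$, this argument lies within $O(n^{-1/6})$ of the turning point $\sqrt{2(n-k)+1}$ of $\varphi_{n-k}$, uniformly in $x\in K$; this is exactly the Airy regime. The edge (Airy-type) Plancherel--Rotach formula, in which $\varphi_{n-k}$ equals $n^{-1/12}$ times a bounded multiple of the Airy function evaluated at a point ranging over a bounded set, then gives $\varphi_{n-k}\!\big(\sqrt{n/2}\,(2+x/n^{2/3})\big)=O(n^{-1/12})$ uniformly in $x\in K$, and multiplying by $(n/2)^{1/4}$ yields $\psi_{n-k}^{(n)}(2+x/n^{2/3})=O(n^{1/6})$, which is \eqref{eq:psiedge}.

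The delicate point, and the one on which the argument really rests, is the \emph{uniform} control of $\varphi_{n-k}$: the Plancherel--Rotach estimates must be invoked in a form uniform for $x$ in a compact set, and one must take care that the degree here is $n-k$ while the scaling uses $n$, i.e.\ verify (as above) that the $O(1)$ shift $k\in\{0,1\}$ is negligible against the width $n^{-1/6}$ of the Airy window (and a fortiori in the bulk). Suitable uniform statements are available in Szeg\H{o}'s monograph (Theorems~8.22.9 for the bulk and~8.22.14 for the edge) and in the Riemann--Hilbert literature on Hermite orthogonal polynomials; alternatively, \eqref{eq:psibulk}--\eqref{eq:psiedge} may be derived directly from a Liouville--Green (WKB) analysis of the oscillator equation $\varphi_k''+(2k+1-t^2)\varphi_k=0$ with a uniform Airy connection formula at the simple turning points $t=\pm\sqrt{2k+1}$; the rescaled equation $u''=\frac{n^2}{4}\big(x^2-4+O(1/n)\big)u$ for $u(x)=\varphi_{n-k}(\sqrt{n/2}\,x)$ makes both the $n^{-1/6}$ edge window and the $n^{-1/4}$ (resp.\ $n^{-1/12}$) amplitude orders transparent.
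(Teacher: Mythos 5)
Your proof is correct and is precisely the argument that the cited reference (Pastur, Chapter~7) carries out: the paper itself offers no proof, only the citation, and Plancherel--Rotach asymptotics for the $L^2$-normalized Hermite functions $\varphi_k$ is the standard route there. Your reduction $\psi_k^{(n)}(x)=(n/2)^{1/4}\varphi_k(\sqrt{n/2}\,x)$, the parity trick to cover the left edge, and the bookkeeping that $\sqrt{2n}-\sqrt{2(n-k)+1}=O(n^{-1/2})$ is negligible both against the $O(1)$ bulk window for $\cos\phi$ and against the $n^{-1/6}$ Airy window are exactly the right observations, and the exponent count ($n^{1/4}\cdot n^{-1/4}=O(1)$ in the bulk, $n^{1/4}\cdot n^{-1/12}=O(n^{1/6})$ at the edge) matches the stated bounds.
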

The proof of these results can be found in \cite[Chapter 7]{Pas07}.

% \begin{proof}
%   a) is a direct consequence of Corollary 7.4 pp. 54 of Pastur's course.
%   b) should be proved or at least found somewhere.
% \end{proof}
\subsubsection{Determinantal representations, Fredholm determinants}

There are determinantal representations using kernel $K_n(x,y)$ for the joint density
$p_n$ of the eigenvalues $(\lambda_i^{(n)};1\leq i\leq n)$, and for its
marginals (see for instance \cite[Chapter 6]{Meh04}:
\begin{eqnarray}
  \label{eq:pdfeig}
  p_n(x_1,\cdots,x_n) &=& \frac{1}{n!}\det \left\{ K_n(x_i,x_j)\right\}_{1\leq i,j\leq n}\ ,\\
\label{eq:marginals}
\int_{\mR^{n-m}} p_n(x_1,\cdots, x_n)dx_{m+1}\cdots dx_n 
&=& \frac{(n-m)!}{n!} \det\left\{ K_n(x_i,x_j) \right\}_{1\leq i,j\leq m}\quad (m\le n)\ .
\end{eqnarray}

% {\it  Function $p_n$ will be designated as the joint pdf of eigenvalues {\sl restricted to symetric functions}.
%   This means that for any function $\Phi : {\cal H}_n\to \mC$ assumed to be unitary invariant
%   and integrable w.r.t. $P_n$, the expectation of $\Phi$ w.r.t. $P_n$ is given by
% $$
% \mE_{P_n}\left[\Phi\right] = \int_{\mR^n}f_\Phi(x_1,\cdots, x_n) p_n(x_1,\cdots,x_n)dx_1\cdots dx_n\:,
% $$
% where $f_\Phi:\mR^n\to \mC$ is a symetric function, whose restriction
% to the set $\{ (x_1,\cdots, x_n)\in\mR^n / x_1 < \cdots <x_n\}$ is equal
% to $\Phi$.} {\sc [Un peu fumeux - on doit pouvoir virer si inutile ou expliciter]}

\begin{definition}
Consider a linear operator $S$ defined for any bounded integrable function $f:\mR\to\mR$ by
$$
S f : x \mapsto \int_\mR S(x,y) f(y) dy\:,
$$
where $S(x,y)$ is a bounded integrable Kernel on $\mR^2\to \mR$ with
compact support. The Fredholm determinant $D(z)$ associated with
operator $S$ is defined as follows:
 \begin{equation}
   \label{eq:fredholm}
   \det(1-z S) := 1+\sum_{k=1}^\infty \frac{(-z)^k}{k!}\int_{\mR^k} 
\det \left\{ S(x_i,x_j)\right\}_{1\leq i,j\leq  k} dx_1\cdots dx_k\:,
 \end{equation}
for each $z\in\mC$, i.e. it is an entire function. Its logarithmic derivative has the simple expression:
\begin{equation}\label{eq:logdet}
\frac{D'(z)}{D(z)} = -\sum_{k=0}^{\infty} T(k+1) z^k\ ,
\end{equation}
where
\begin{equation}\label{eq:trace}
T(k) = \int_{\mathbb{R}^k} S(x_1,x_2) S(x_2,x_3)\cdots S(x_k,x_1)\,dx_1\cdots dx_k\ .
\end{equation}
\end{definition}
For details related to Fredholm determinants, see for instance \cite{Smi58,Tri57}. 

The following kernel will be of constant use in the sequel:
\begin{equation}
  S_n(x,y ;\bs\lambda,\bs\Delta ) = \sum_{i=1}^p \lambda_i \mathbf{1}_{\Delta_i}(x) K_n(x,y),
\label{eq:Sn}
\end{equation}
where $\bs\lambda=(\lambda_1,\cdots,\lambda_p)\in\mR^p$ or
$\bs\lambda\in\mC^p$, depending on the need.  
\begin{remark}
  Kernel $K_n(x,y)$ is unbounded and one cannot consider its
  Fredholm determinant without caution. Kernel $S_n(x,y)$ is bounded
  in $x$ since the kernel is zero if $x$ is outside the compact set
  $\cup_{i=1}^p \Delta_i$, but a priori unbounded in $y$. In all the
  forthcoming computations, one may replace $S_n$ with the bounded
  kernel $\tilde S_n(x,y)= \sum_{i,\ell=1}^p \lambda_i
  \mathbf{1}_{\Delta_i}(x) \mathbf{1}_{\Delta_{\ell}}(y) K_n(x,y)$ and
  get exactly the same results. For notational convenience, we keep on working with $S_n$. 
\end{remark}

\begin{prop}\label{prop:representation}
  Let $p\geq 1$ be a fixed integer; let ${\bs \ell} = (\ell_1,\cdots,\ell_p)\in
  \mathbb{N}^p$ and denote by $\bs\Delta = (\Delta_1,\cdots,\Delta_p)$, where
  every $\Delta_i$ is a bounded Borel set. Assume that the
  $\Delta_i$'s are pairwise disjoint. Then the following identity holds true:
  \begin{multline}\label{eq:Enl}
     \mathbb{P}\left\{ {\mathcal N}(\Delta_1)=\ell_1,\cdots,{\cal
             N}(\Delta_p)=\ell_p \right\}    \\
         = \frac{1}{\ell_1!\cdots
           \ell_p!}\left(-\frac{\partial}
 {\partial\lambda_1}\right)^{\ell_1}\cdots\left(-\frac{\partial}{\partial\lambda_p}\right)^{\ell_p}
         \det\left(1- S_n(\bs\lambda,\bs\Delta)\right)\bigg
         |_{\lambda_1=\cdots=\lambda_p=1}\:,
\end{multline}
where $S_n(\bs\lambda,\bs\Delta)$ is the operator associated to the kernel defined in \eqref{eq:Sn}.
\end{prop}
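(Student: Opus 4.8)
The plan is to express the joint probability as an inclusion-exclusion over eigenvalue configurations, then recognise the resulting alternating sum of integrals of determinants as the derivatives of a Fredholm determinant. First, fix the disjoint bounded Borel sets $\Delta_1,\dots,\Delta_p$ and introduce the formal variables $\bs\lambda=(\lambda_1,\dots,\lambda_p)$. The key object is the generating function
\begin{equation}\label{eq:genfun}
G_n(\bs\lambda) := \mathbb{E}\left[\prod_{j=1}^n \left(1 - \sum_{i=1}^p \lambda_i \mathbf{1}_{\Delta_i}(\lambda_j^{(n)})\right)\right] = \mathbb{E}\left[\prod_{i=1}^p (1-\lambda_i)^{{\mathcal N}(\Delta_i)}\right],
\end{equation}
the second equality because the $\Delta_i$ are disjoint, so for each eigenvalue at most one indicator fires. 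Expanding $\prod_i(1-\lambda_i)^{{\mathcal N}(\Delta_i)}$ and matching the coefficient of $\prod_i \lambda_i^{\ell_i}$ shows that $\mathbb{P}\{{\mathcal N}(\Delta_1)=\ell_1,\dots,{\mathcal N}(\Delta_p)=\ell_p\}$ equals $\frac{1}{\ell_1!\cdots\ell_p!}\left(-\partial_{\lambda_1}\right)^{\ell_1}\cdots\left(-\partial_{\lambda_p}\right)^{\ell_p} G_n(\bs\lambda)\big|_{\bs\lambda=\mathbf{1}}$; this is a purely algebraic identity valid for any random point process, once one checks that $G_n$ is a polynomial in the $\lambda_i$ of degree at most $n$ in each variable (true since ${\mathcal N}(\Delta_i)\le n$ a.s.), so that all derivatives and the evaluation at $\bs\lambda=\mathbf 1$ make sense.

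The second step is to identify $G_n(\bs\lambda)$ with the Fredholm determinant $\det(1-S_n(\bs\lambda,\bs\Delta))$. Expanding the product in \eqref{eq:genfun} and using linearity of expectation gives
\begin{equation}\label{eq:expand}
G_n(\bs\lambda) = \sum_{k=0}^n \frac{(-1)^k}{k!}\,\mathbb{E}\!\!\sum_{\substack{j_1,\dots,j_k \text{ distinct}}} \prod_{m=1}^k \phi(\lambda_{j_m}^{(n)}), \qquad \phi(x):=\sum_{i=1}^p \lambda_i \mathbf{1}_{\Delta_i}(x),
\end{equation}
where the inner sum is over ordered $k$-tuples of distinct indices (hence the $1/k!$), and I have used that $\prod_{j=1}^n(1-\phi(\lambda_j^{(n)}))=\sum_{k}\frac{(-1)^k}{k!}\sum_{j_1,\dots,j_k\text{ distinct}}\prod_m\phi(\lambda_{j_m}^{(n)})$. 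By symmetry of $\phi$ and the $k$-point correlation formula, $\mathbb{E}\sum_{j_1,\dots,j_k\text{ distinct}}\prod_m\phi(\lambda_{j_m}^{(n)}) = \frac{n!}{(n-k)!}\int_{\mR^k}\phi(x_1)\cdots\phi(x_k)\,p_n^{(k)}(x_1,\dots,x_k)\,dx_1\cdots dx_k$ where $p_n^{(k)}$ is the $k$-th marginal density; substituting the determinantal form \eqref{eq:marginals}, namely $p_n^{(k)}(x_1,\dots,x_k)=\frac{(n-k)!}{n!}\det\{K_n(x_i,x_j)\}_{1\le i,j\le k}$, cancels the combinatorial prefactor and yields $G_n(\bs\lambda)=\sum_{k\ge 0}\frac{(-1)^k}{k!}\int_{\mR^k}\det\{\phi(x_i)K_n(x_i,x_j)\}_{i,j}\,d^k x$. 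Finally, pulling the factor $\phi(x_i)$ out of row $i$ of the determinant identifies $\phi(x_i)K_n(x_i,x_j)=S_n(x_i,x_j;\bs\lambda,\bs\Delta)$, so this is exactly the Fredholm-determinant series \eqref{eq:fredholm} evaluated at $z=1$, i.e. $G_n(\bs\lambda)=\det(1-S_n(\bs\lambda,\bs\Delta))$. Combining with the first step gives \eqref{eq:Enl}.

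The main obstacle is not conceptual but bookkeeping: one must be careful that $S_n$ is a legitimate kernel for the Fredholm-determinant formalism. As the Remark preceding the Proposition notes, $S_n(x,y)$ is unbounded in $y$; the clean fix is to observe that in every integral $\int_{\mR^k}\det\{S_n(x_i,x_j)\}\,d^kx$ each variable $x_i$ is constrained to $\cup_i\Delta_i$ through the indicator in the first argument of $S_n(x_i,\cdot)$, so one may freely replace $S_n$ by the bounded compactly supported kernel $\tilde S_n$ of the Remark without changing any term — this justifies termination of the sum at $k=n$ (determinants of the rank-$n$ kernel $K_n$ vanish for $k>n$) and the interchange of expectation and summation. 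A secondary point to verify is that $\det(1-S_n(\bs\lambda,\bs\Delta))$, a priori entire in each $\lambda_i$, is in fact polynomial of degree $\le n$ in each, which follows from the same rank bound; this guarantees that differentiating term by term and then setting $\bs\lambda=\mathbf 1$ is legitimate and matches the coefficient extraction from Step 1.
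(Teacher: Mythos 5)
Your proof is correct and takes essentially the same route as the paper's: both identify $\mathbb{P}\{{\mathcal N}(\Delta_1)=\ell_1,\dots\}$ as $\frac{1}{\ell_1!\cdots\ell_p!}\prod_i(-\partial_{\lambda_i})^{\ell_i}$ of the function $\Gamma(\bs\lambda,\bs\Delta)=\mathbb{E}\prod_i(1-\lambda_i)^{{\mathcal N}(\Delta_i)}$ evaluated at $\bs\lambda=\mathbf 1$, and both then expand $\Gamma$ via the $k$-point correlation formula and the determinantal marginal identity~\eqref{eq:marginals} to recognise the Fredholm series~\eqref{eq:fredholm}. Your first step differentiates the probability generating function directly, where the paper first expands the event over index subsets and then collapses via a combinatorial differential identity, but this is the same calculation in different clothing; your remarks on the finite-rank truncation and on replacing $S_n$ by the bounded $\tilde S_n$ are likewise consistent with the paper.
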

\medskip

Proof of Proposition \ref{prop:representation} is postponed to Appendix \ref{proof:representation}.

\subsubsection{Useful estimates for kernel $S_n(x,y; \bs \lambda,\bs \Delta)$ and its iterations}

Consider ${\bs \mu}$,
${\bs \Delta}$ and ${\bs \Delta}_n$ as in Theorem \ref{theo:main}.
Assume moreover that $n$ is large enough so that the Borel sets
$(\Delta_{i,n}; 1\leq i\leq p)$ are pairwise disjoint. 
For $i\in \{1,\cdots,p\}$, define $\kappa_i$ as
\begin{equation}
  \kappa_i=\left\{
  \begin{array}{l}
    1\;\textrm{ if } -2 < \mu_i < 2 \\
    \frac{2}{3}\;\textrm{ if } \mu_i=± 2
  \end{array}\right.\ ,
\label{eq:deltai}
\end{equation}
i.e. $\kappa_1=\kappa_p=\frac{2}{3}$ and $\kappa_i=1$ for $1<i<p$. 

Let $\bs \lambda\in \mC^p$. With a slight abuse of notation, denote by
$S_n(x,y;\bs \lambda)$ the kernel:
\begin{equation}\label{eq:Sn-Deltan}
S_n(x,y;\bs \lambda):= S_n(x,y;{\bs \lambda}, {\bs \Delta}_n)\ .
\end{equation}

For $1\le m,\ell \le p$, define:
\begin{eqnarray} {\cal M}_{m×\ell,n}(\bs \Lambda) &:=& \sup_{\bs
    \lambda \in \bs \Lambda} \sup_{(x,y)\in
    \Delta_{m,n}×\Delta_{\ell,n}} \left| S_n(x,y;\bs \lambda)
  \right|\:,
\end{eqnarray}
where $S_n(x,y;\bs \lambda)$ is given by \eqref{eq:Sn-Deltan}.

\begin{prop}
\label{prop:ray}
Let $\bs \Lambda \subset \mC^p$ be a
  compact set. There exist two constants $R=R(\bs \Lambda)>0$ and
  $C=C(\bs \Lambda)>0$, independent from $n$, such that for $n$ large enough, 
\begin{equation}\label{eq:ray-limcross-uniform}
\left\{
\begin{array}{lcll}
   {\cal M}_{m× m,n}({\bs \Lambda}) &\leq& \dsp R^{-1}n^{\kappa_m}\ , & 1\leq m\leq p\\
  {\cal M}_{m× \ell,n}({\bs \Lambda}) &\leq&\dsp  C n^{1-\frac{\kappa_m +\kappa_{\ell}}2} \ , & 1\leq m,\ell \leq p,\ m\neq \ell
\end{array}
\right.\ .
\end{equation}
\end{prop}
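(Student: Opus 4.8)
The plan is to bound $|S_n(x,y;\bs\lambda)|$ on each block $\Delta_{m,n}\times\Delta_{\ell,n}$ by reducing everything to the asymptotics of the kernel $K_n$ recorded in Proposition~\ref{prop:convergence-kernel} (for the diagonal blocks) and to the pointwise formula~\eqref{eq:kernel} together with the bounds on $\psi_{n}^{(n)},\psi_{n-1}^{(n)}$ from \eqref{eq:psibulk}--\eqref{eq:psiedge} (for the off-diagonal blocks). Since, by \eqref{eq:Sn}, $S_n(x,y;\bs\lambda)=\sum_i\lambda_i\mathbf 1_{\Delta_{i,n}}(x)K_n(x,y)$, we have for $(x,y)\in\Delta_{m,n}\times\Delta_{\ell,n}$ simply $S_n(x,y;\bs\lambda)=\lambda_m K_n(x,y)$, so $\mathcal M_{m\times\ell,n}(\bs\Lambda)\le \left(\sup_{\bs\lambda\in\bs\Lambda}|\lambda_m|\right)\cdot\sup_{(x,y)\in\Delta_{m,n}\times\Delta_{\ell,n}}|K_n(x,y)|$, and the supremum over $\bs\lambda$ in the compact set $\bs\Lambda$ is a finite constant. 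Thus the whole statement reduces to estimating $K_n$ on each block, and the factor coming from $\bs\Lambda$ only affects the constants $R,C$.

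First I would treat the diagonal blocks $m=\ell$. Since $\Delta_m$ is bounded, pick a compact interval $K_m$ containing it. If $\mu_m\in(-2,2)$ (so $\kappa_m=1$), write $x=\mu_m+x'/n$, $y=\mu_m+y'/n$ with $(x',y')\in K_m\times K_m$; by the uniform convergence in \eqref{eq:Kbulk}, $\frac1n K_n(\mu_m+x'/n,\mu_m+y'/n)$ is bounded uniformly on $K_m\times K_m$ for large $n$ (its limit, the sine kernel, is continuous hence bounded on the compact set, with the diagonal value $\rho(\mu_m)$), so $|K_n(x,y)|\le R^{-1}n=R^{-1}n^{\kappa_m}$ for a suitable $R>0$. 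If $\mu_m=\pm2$ (so $\kappa_m=2/3$), the same argument with \eqref{eq:Kedge} and the appropriate sign gives $|K_n(x,y)|\le R^{-1}n^{2/3}=R^{-1}n^{\kappa_m}$. Taking $R$ to be the minimum of the finitely many radii obtained and absorbing the $\bs\Lambda$-factor yields the first line of \eqref{eq:ray-limcross-uniform}.

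For the off-diagonal blocks $m\neq\ell$ I would use the Christoffel--Darboux form \eqref{eq:kernel}: for $(x,y)\in\Delta_{m,n}\times\Delta_{\ell,n}$,
\[
|K_n(x,y)|\ \le\ \frac{|\psi_n^{(n)}(x)|\,|\psi_{n-1}^{(n)}(y)|+|\psi_n^{(n)}(y)|\,|\psi_{n-1}^{(n)}(x)|}{|x-y|}\,.
\]
The denominator $|x-y|$ is bounded below: the centres $\mu_m$ and $\mu_\ell$ are distinct and fixed, while the sets $\Delta_{m,n},\Delta_{\ell,n}$ shrink around them at rate $n^{-\kappa_m}$ resp.\ $n^{-\kappa_\ell}$, so for large $n$ one has $|x-y|\ge \tfrac12|\mu_m-\mu_\ell|>0$ (with the obvious adaptation when, say, $\mu_m=-2$ and $\mu_\ell=2$: still a fixed positive gap; the only pairs to watch are consecutive bulk points $\mu_m,\mu_\ell$, which are nonetheless fixed and distinct, so the gap is a fixed positive constant). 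For the numerator, apply \eqref{eq:psibulk} at a bulk centre ($\psi$ bounded, i.e.\ an $n^{0}$ bound, and $1-\kappa=0$ there) and \eqref{eq:psiedge} at an edge centre ($\psi$ bounded by $n^{1/6}$, and $1-\kappa=1/3$ there); in both cases the bound on $\sup_{K}|\psi_{n-k}^{(n)}(\mu+\,\cdot\,)|$ is $O(n^{\,(1-\kappa)/2})$ with $\kappa$ the corresponding exponent (indeed $n^0=n^{(1-1)/2}$ and $n^{1/6}=n^{(1-2/3)/2}$). Hence each of the two products in the numerator is $O\!\left(n^{(1-\kappa_m)/2}\,n^{(1-\kappa_\ell)/2}\right)=O\!\left(n^{\,1-(\kappa_m+\kappa_\ell)/2}\right)$, and dividing by the constant lower bound on $|x-y|$ and multiplying by the $\bs\Lambda$-factor gives $\mathcal M_{m\times\ell,n}(\bs\Lambda)\le C\,n^{\,1-(\kappa_m+\kappa_\ell)/2}$, which is the second line of \eqref{eq:ray-limcross-uniform}.

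The only genuinely delicate point is the uniform lower bound on $|x-y|$ for the off-diagonal blocks: one must check that the rescaled windows around two distinct fixed centres stay separated by a fixed positive distance for all large $n$, which is immediate since the windows have diameters $O(n^{-2/3})$ while the centre gap $|\mu_m-\mu_\ell|$ is a fixed positive number. Everything else is bookkeeping with the cited asymptotics of $K_n$ and $\psi_{n-k}^{(n)}$; note in particular that the appearance of the exponent $1-(\kappa_m+\kappa_\ell)/2$ is exactly dictated by the matching $\sup|\psi_{n-k}^{(n)}|=O(n^{(1-\kappa)/2})$ at each centre.
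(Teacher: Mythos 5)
Your proof is correct and follows essentially the same route as the paper: diagonal blocks via the uniform convergence of the rescaled kernel to the sine/Airy kernels from Proposition~\ref{prop:convergence-kernel}, and off-diagonal blocks via the Christoffel--Darboux form~\eqref{eq:kernel}, the unified estimate $\sup_{\Delta_{i,n}}|\psi_{n-k}^{(n)}|=O(n^{(1-\kappa_i)/2})$ obtained by merging \eqref{eq:psibulk} and~\eqref{eq:psiedge}, and the fixed positive lower bound on $|x-y|$ coming from the separation of the distinct centres $\mu_m\neq\mu_\ell$. This matches the paper's argument in all essentials, including the exponent bookkeeping $n^{(1-\kappa_m)/2}\,n^{(1-\kappa_\ell)/2}=n^{1-(\kappa_m+\kappa_\ell)/2}$.
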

Proposition \ref{prop:ray} is proved in Appendix \ref{proof-prop:ray}.

Consider the iterated kernel $|S_n|^{(k)}(x,y;\bs \lambda)$ defined by:
\begin{equation}\label{eq:def-iterated}
\left\{
\begin{array}{ll}
|S_n|^{(1)}(x,y;\bs \lambda)= |S_n(x,y;\bs \lambda)| &\\
|S_n|^{(k)}(x,y;\bs \lambda)=\int_{\mathbb{R}^{k-1}} |S_n(x,u;\bs\lambda)|× |S_n|^{(k-1)} (u,y;\bs \lambda)\,du &k\geq 2
\end{array}
\right.\ ,
\end{equation}
where $S_n(x,y;\bs \lambda)$ is given by \eqref{eq:Sn-Deltan}. The
next estimates will be stated with $\bs \lambda \in \mC^p$ fixed. Note that
$|S_n|^{(k)}$ is nonnegative and writes:
$$
 \int_{\mR^{k-1}}|S_n(x,u_1;\bs\lambda)S_n(u_1,u_2;\bs\lambda)\cdots S_n(u_{k-1},y;\bs\lambda)|du_1\cdots du_{k-1}\ .
$$
As previously, define for $1\leq m,\ell\leq p$:
$$
  {\cal M}_{m× \ell,n}^{(k)}(\bs\lambda) := \sup_{(x,y)\in \Delta_{m,n}×\Delta_{\ell,n}} |S_n|^{(k)}(x,y;\bs \lambda)
$$
The following estimates hold true:
\begin{prop}\label{prop:estimees-M}
  Consider the compact set $\bs \Lambda = \{\bs \lambda\}$ and the
  associated constants $R=R(\bs\lambda)$ and $C=C(\bs \lambda)$ as
  given by Prop. \ref{prop:ray}. Let $\beta>0$ be such that $\beta>
  R^{-1}$ and consider $\epsilon \in (0,\frac{1}{3})$.  There exists
  an integer $N_0=N_0(\beta,\epsilon)$ such that for every $n\geq N_0$
  and for every $k\geq 1$,
\begin{equation}
  \label{eq:Knbd}
  \left\{
    \begin{array}{lcll}
       \dsp {\cal M}_{m× m,n}^{(k)}(\bs \lambda) &\leq& \dsp \beta^k n^{\kappa_m} \ , &1\leq  m\leq p\\   
       \dsp {\cal M}_{m× \ell,n}^{(k)} (\bs \lambda)&\leq& \dsp C \beta^{k-1}\ n^{\left(1+\epsilon -\frac{\kappa_m +\kappa_{\ell}}{2}\right)}\ ,
& 1\leq m,\ell\leq p,\ m\neq \ell
    \end{array}\right. \ .
\end{equation}
\end{prop}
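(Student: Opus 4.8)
The plan is to prove both inequalities in~\eqref{eq:Knbd} simultaneously, by induction on $k$, the recursion~\eqref{eq:def-iterated} furnishing the induction step. For $k=1$ the iterated kernel is simply $|S_n(\cdot,\cdot;\bs\lambda)|$, and the two bounds follow at once from Proposition~\ref{prop:ray} applied with $\bs\Lambda=\{\bs\lambda\}$: ${\cal M}^{(1)}_{m\times m,n}(\bs\lambda)\le R^{-1}n^{\kappa_m}\le\beta\,n^{\kappa_m}$ because $\beta>R^{-1}$, and ${\cal M}^{(1)}_{m\times\ell,n}(\bs\lambda)\le C\,n^{1-\frac{\kappa_m+\kappa_\ell}{2}}\le C\,n^{1+\epsilon-\frac{\kappa_m+\kappa_\ell}{2}}$ because $n^{\epsilon}\ge1$.

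For the induction step, assume~\eqref{eq:Knbd} at level $k$ and fix $x\in\Delta_{m,n}$, $y\in\Delta_{\ell,n}$. By~\eqref{eq:def-iterated}, $|S_n|^{(k+1)}(x,y;\bs\lambda)=\int_{\mR}|S_n(x,u;\bs\lambda)|\,|S_n|^{(k)}(u,y;\bs\lambda)\,du$; since the leading $S_n$-factor in $|S_n|^{(k)}(u,y;\bs\lambda)$ carries $\mathbf 1_{\bigcup_{j}\Delta_{j,n}}(u)$, the integral is a finite sum over $j$, and bounding $|S_n|^{(k)}(u,y;\bs\lambda)\le{\cal M}^{(k)}_{j\times\ell,n}(\bs\lambda)$ on $\Delta_{j,n}$ gives
\[
|S_n|^{(k+1)}(x,y;\bs\lambda)\ \le\ \sum_{j=1}^{p} T_{m\times j,n}\,{\cal M}^{(k)}_{j\times\ell,n}(\bs\lambda),
\qquad
T_{m\times j,n}:=\sup_{x'\in\Delta_{m,n}}\int_{\Delta_{j,n}}|S_n(x',u;\bs\lambda)|\,du .
\]
Everything then reduces to the ``transfer coefficients'' $T_{m\times j,n}$. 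For $j=m$, the scaling $u=\mu_m+v/n$ (resp.\ $u=\pm2+v/n^{2/3}$) together with the uniform kernel convergence of Proposition~\ref{prop:convergence-kernel} shows that $T_{m\times m,n}$ converges to a finite limit bounded by $R^{-1}$ (this bound being part of what the proof of Proposition~\ref{prop:ray} yields); since $\beta>R^{-1}$, we get $T_{m\times m,n}\le\beta':=\tfrac12(\beta+R^{-1})<\beta$ for $n$ large. For $j\ne m$, Proposition~\ref{prop:ray} and $|\Delta_{j,n}|=|\Delta_j|\,n^{-\kappa_j}$ give $T_{m\times j,n}\le|\Delta_{j,n}|\,{\cal M}_{m\times j,n}\le|\Delta_j|\,C\,n^{1-\frac{\kappa_m}{2}-\frac{3\kappa_j}{2}}$, whose exponent is $\le-\tfrac13$ since $\kappa_j\ge\tfrac23$; hence $T_{m\times j,n}=o(1)$.

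One then inserts the induction hypothesis for ${\cal M}^{(k)}_{j\times\ell,n}$ into the displayed sum and counts powers of $n$, distinguishing whether $j$ equals $m$, equals $\ell$, or neither. The only term reproducing the full target power of $n$ is the one with $j=m$, in which the diagonal transfer $T_{m\times m,n}\le\beta'$ multiplies the level-$k$ factor ${\cal M}^{(k)}_{m\times\ell,n}$; it contributes at most $\beta'\beta^k n^{\kappa_m}$ when $m=\ell$ and at most $(\beta'/\beta)\,C\beta^k n^{1+\epsilon-\frac{\kappa_m+\kappa_\ell}{2}}$ when $m\ne\ell$. Every remaining term carries an extra factor $n^{-\gamma}$ with $\gamma>0$ (the exponents to verify are $\gamma=\epsilon$, $\gamma=2\kappa_j-1\ge\tfrac13$, and $\gamma=2\kappa_m+2\kappa_j-2-\epsilon>0$, this last because $\epsilon<\tfrac23$; the sharper assumption $\epsilon<\tfrac13$ is what makes $1+\epsilon-\frac{\kappa_m+\kappa_\ell}{2}<\min(\kappa_m,\kappa_\ell)$, as is needed when this proposition is applied). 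Choosing $N_0=N_0(\beta,\epsilon)$ large enough that, for all $k$, the sum of these $o(1)$ remainders is at most $(\beta-\beta')\beta^k n^{\kappa_m}$ (resp.\ $(1-\beta'/\beta)C\beta^k n^{1+\epsilon-\frac{\kappa_m+\kappa_\ell}{2}}$) closes the induction at level $k+1$, with the same $N_0$.

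The delicate part is not any single estimate but the uniformity of the bookkeeping in $k$: one has to check, for every pair $(m,\ell)$ and every intermediate index $j$, that the three accumulated powers of $n$ never exceed the target exponent, and --- crucially --- that the constant in front of the leading term can be kept $\le\beta$ \emph{uniformly in} $k$, so that the bound remains geometric in $k$ instead of growing with $k$. This is precisely where the three hypotheses are used: $\beta>R^{-1}$ (so a fixed $\beta'<\beta$ dominates the diagonal transfers), $\epsilon\in(0,\tfrac13)$, and the \emph{vanishing} --- not mere boundedness --- of $|\Delta_{j,n}|$ (which makes the off-diagonal transfers $o(1)$).
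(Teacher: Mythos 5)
Your proof follows essentially the same route as the paper's: an induction on $k$ driven by the recursion $|S_n|^{(k+1)}(x,y)\le \sum_{j}T_{m\times j,n}\,{\cal M}^{(k)}_{j\times\ell,n}$ with $T_{m\times j,n}\le|\Delta_{j,n}|\,{\cal M}_{m\times j,n}$, which is exactly the paper's inequality~\eqref{eq:Munk}; the diagonal transfer ($j=m$) gives the geometric factor close to $R^{-1}<\beta$ while the off-diagonal transfers carry negative powers of $n$ (your exponent check $\epsilon$, $2\kappa_j-1$, $2\kappa_m+2\kappa_j-2-\epsilon$ matches the paper's), and the $o(1)$ corrections being independent of $k$ is what makes $N_0$ choosable uniformly in $k$. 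Your intermediate constant $\beta'=\tfrac12(\beta+R^{-1})$ is a cosmetic repackaging of the paper's ``parenthesis $\le 1$ for $n$ large'' and is fine. One shared imprecision worth flagging: the diagonal transfer actually satisfies $T_{m\times m,n}\le |\Delta_{m,n}|\,{\cal M}_{m\times m,n}\le |\Delta_m|\,R^{-1}$, i.e.\ carries an extra $|\Delta_m|$ which neither you nor the paper's~\eqref{eq:Munk} displays; it is a constant-only slip (absorb $\max_m|\Delta_m|$ into $R^{-1}$ once and for all) and does not affect the argument, but strictly $\beta>R^{-1}$ alone does not dominate $T_{m\times m,n}$ unless this is done.
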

Proposition \ref{prop:estimees-M} is proved in Appendix \ref{proof-prop:estimees-M}.

\subsection{End of proof}
\label{sec:proof2}

Consider ${\bs \mu}$, ${\bs \Delta}$ and ${\bs \Delta}_n$ as in
Theorem \ref{theo:main}.  Assume moreover that $n$ is large enough so
that the Borel sets $(\Delta_{i,n}; 1\leq i\leq p)$ are pairwise
disjoint. As previously, denote $S_n(x,y;\bs
\lambda)=S_n(x,y;\bs\lambda,\bs\Delta_n)$; denote also
$S_{i,n}(x,y;\lambda_i)=S_n(x,y;\lambda_i,\Delta_{i,n})= \lambda_i
\mathbf{1}_{\Delta_{i}}(x)K_n(x,y)$, for $1\leq i\leq p$. Note that
$S_n(x,y;\bs \lambda) =S_{i,n}(x,y;\lambda_i)$ if $x\in\Delta_{i,n}$.

For every $z\in\mC$ and $\bs \lambda\in \mC^p$, we use the following
notations:
\begin{equation}\label{eq:Dn}
D_{n}(z, \bs \lambda) := \det(1-zS_n(\bs\lambda,\bs \Delta_{n}))
\quad \textrm{and}\quad 
D_{n,i}(z, \lambda_i) := \det(1-zS_n(\lambda_i,\Delta_{i,n}))
\end{equation}

The following controls will be of constant use in the sequel.

\begin{prop}
\label{prop:DvoisZero}
\begin{enumerate}
\item Let $\bs \lambda\in \mC^p$ be fixed. The sequences of functions:
$$
z\mapsto D_{n}(z,\bs \lambda)\qquad \textrm{and} \qquad z\mapsto D_{i,n}(z,\lambda_i),\quad 1\leq i\leq p
$$
are uniformly bounded on every compact subset of $\mC$. 
\item Let $z=1$. The sequences of functions:
$$
\bs\lambda \mapsto D_{n}(1,{\bs \lambda})\qquad \textrm{and}\qquad \bs\lambda\mapsto D_{1,n}(1,
    \lambda_i),\quad 1\leq i\leq p
$$
are uniformly bounded on every compact subset of $\mC^p$.
\item Let $\bs \lambda\in \mC^p$ be fixed. For every $\delta>0$, there
  exists $r>0$ such that
\begin{eqnarray*}
&&\sup_n\sup_{z\in B(0,r)}|D_{n}(z,\bs\lambda)-1|\quad <\quad \delta\ ,\\
&& \sup_n  \sup_{z\in B(0,r)}|D_{i,n}(z,\lambda_i)-1|\quad <\quad \delta\ ,\quad 1\leq i\leq p\ . 
\end{eqnarray*} 

\end{enumerate}
\end{prop}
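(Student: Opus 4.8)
The plan is to obtain all three statements from the Fredholm expansion \eqref{eq:fredholm} together with Hadamard's inequality and the kernel estimates of Proposition \ref{prop:ray}. First I would recall that for any bounded kernel with values dominated by $M$ on its support $\Delta\times\Delta$ with $|\Delta|\leq L$, Hadamard's inequality gives $|\det\{S(x_i,x_j)\}_{i,j\le k}|\le k^{k/2}M^k$, so the $k$-th term in \eqref{eq:fredholm} is bounded in modulus by $\frac{|z|^k}{k!}k^{k/2}M^kL^k$, and by Stirling this is summable; hence $\det(1-zS)$ is bounded by a convergent series depending only on $|z|$, $M$ and $L$. The issue here is that $M$ and $L$ depend on $n$: by Proposition \ref{prop:ray}, on the diagonal block $\Delta_{m,n}\times\Delta_{m,n}$ the kernel $S_n$ is of size $n^{\kappa_m}$, while $|\Delta_{m,n}|$ is of order $n^{-\kappa_m}$ (since $\Delta_{m,n}=\mu_m+\Delta_m/n^{\kappa_m}$ up to the sign in the edge case), so the product $M\cdot L$ stays bounded. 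More precisely, I would decompose $[0,1]^{\mathbb N}$... rather, I would split each integration variable $x_j$ over the $p$ sets $\Delta_{i_j,n}$ and use the block estimates: a term with indices $(i_1,\dots,i_k)$ is controlled, via Hadamard applied blockwise, by a product of the $\mathcal M_{m\times \ell,n}$ times the Lebesgue measures of the relevant sets, and the normalization $n^{\kappa}\cdot n^{-\kappa}=O(1)$ makes the whole sum over $k$ converge uniformly in $n$. This handles item 1 for $D_n$; the same argument with a single set $\Delta_{i,n}$ handles $D_{i,n}$.

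For item 2, the only new point is uniformity in $\bs\lambda$ over a compact $\bs\Lambda\subset\mathbb C^p$ at $z=1$. Here I would invoke the fact that Proposition \ref{prop:ray} is stated uniformly over a compact $\bs\Lambda$: the constants $R(\bs\Lambda)$ and $C(\bs\Lambda)$ do not depend on the particular $\bs\lambda\in\bs\Lambda$. Running the bound from item 1 with these $\bs\Lambda$-uniform constants (and $|z|=1$) yields a convergent majorant series independent of both $n$ and $\bs\lambda\in\bs\Lambda$, which is exactly the claimed uniform boundedness.

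For item 3, I would again use the same termwise bound but now keep track of the power of $|z|$: with $\bs\lambda$ fixed, each term is bounded by $c_k|z|^k$ where $\sum_{k\ge 1}c_k<\infty$ by the estimates above. The constant term of the expansion is exactly $1$, so $|D_n(z,\bs\lambda)-1|\le\sum_{k\ge1}c_k|z|^k$, and since this series is finite and vanishes at $z=0$, for any $\delta>0$ one can choose $r>0$ with $\sum_{k\ge1}c_kr^k<\delta$; this $r$ works simultaneously for all $n$ because the $c_k$ are $n$-independent. The same reasoning applies to $D_{i,n}$. I expect the main obstacle to be the bookkeeping in item 1: one must organize the blockwise application of Hadamard's inequality so that the "small measure beats large kernel" cancellation $n^{\kappa_m}\cdot|\Delta_{m,n}|=O(1)$ is visible in every summand, and in particular check that the off-diagonal blocks (of size $n^{1-(\kappa_m+\kappa_\ell)/2}$) do not spoil the estimate — but since each integration variable ultimately contributes one factor of a set measure and the worst case is the diagonal one, the geometric-type bound goes through; the remaining work (Stirling, summation) is routine.
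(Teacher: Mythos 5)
Your proposal is correct and follows essentially the same route as the paper: it bounds the $k$-th term of the Fredholm expansion via Hadamard's inequality, decomposes the integration domain over the $p$ sets $\Delta_{\sigma(1),n}\times\cdots\times\Delta_{\sigma(k),n}$ so that each integration variable contributes one set measure $|\Delta_{\sigma(i),n}|\sim n^{-\kappa_{\sigma(i)}}$ which cancels the kernel size $\mathcal{M}_{\sigma(i)\times\sigma(j),n}$ from Proposition~\ref{prop:ray}, and then sums using Stirling. The uniformity of the constants $R(\bs\Lambda),C(\bs\Lambda)$ for item~2 and the $n$-independent majorant series vanishing at $z=0$ for item~3 are exactly the paper's arguments.
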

Proof of Proposition \ref{prop:DvoisZero} is provided in Appendix \ref{proof-prop:DvoisZero}.

We introduce the following functions: 
\begin{eqnarray}
d_n: (z,\bs\lambda)&\mapsto& \det\left(1- zS_n(\bs\lambda,\bs\Delta_n)\right) -
\prod_{i=1}^p \det\left(1- zS_n(\lambda_i,\Delta_{i,n})\right)\ ,\label{eq:dn}\\
f_n: (z,\bs\lambda)&\mapsto&   \frac{D_{n}'(z,\bs\lambda)}{D_{n}(z,\bs\lambda)} -   
\sum_{i=1}^p\frac{D_{i,n}'(z,\lambda_i)}{D_{i,n}(z,\lambda_i)}\ ,\label{eq:fn}
\end{eqnarray}
where $'$ denotes the derivative with respect to $z\in \mC$. We first
prove that $f_n$ goes to zero as $z\to 0$.

\subsubsection{Asymptotic study of  $f_n$ in a neighbourhood of $z=0$}\label{fn-to-zero}

In this section, we mainly consider the dependence of $f_n$ in $z\in
\mC$ while $\bs \lambda\in \mC^p$ is kept fixed. We therefore drop the
dependence in $\bs \lambda$ to lighten the notations. Equality
\eqref{eq:logdet} yields:
\begin{equation}\label{eq:Dn}
\frac{D_{n}'(z)}{D_{n}(z)} = - \sum_{k=0}^{\infty}T_{n}(k+1) z^k
\quad \textrm{and}\quad 
\frac{D_{i,n}'(z)}{D_{i,n}(z)} = - \sum_{k=0}^{\infty}T_{i,n}(k+1) z^k\quad (1\leq i\leq p)
\end{equation}
where $'$ denotes the derivative with respect to $z\in \mC$ and
$T_{n}(k)$ and $T_{i,n}(k)$ are as in \eqref{eq:trace}, respectively
defined by:
\begin{eqnarray}
  T_{n}(k) &:=& \int_{\mR^k}S_n(x_1,x_2)S_n(x_2,x_3)\cdots S_n(x_k,x_1)dx_1\cdots dx_k\:,
\label{eq:Tn}\\
  T_{i,n}(k) &:=& \int_{\mR^k}S_{i,n}(x_1,x_2)S_{i,n}(x_2,x_3)\cdots S_{i,n}(x_k,x_1)dx_1\cdots dx_k\:.
\label{eq:Tin}
\end{eqnarray}

Recall that $D_n$ and $D_{i,n}$ are entire functions (of $z\in \mC$).
The functions $\dsp \frac {D_n'}{D_n}$ and $\dsp \frac
{D_{i,n}'}{D_{i,n}}$ admit a power series expansion around zero given
by \eqref{eq:Dn}. Therefore, the same holds true for $f_n(z)$, moreover:

\begin{lemma}
\label{lem:fn}
  Define $R$ as in Proposition \ref{prop:ray}. For $n$ large enough, $f_n(z)$ defined by~(\ref{eq:fn})
is holomorphic on $B(0,R):=\{ z\in \mathbb{C},\ |z|<R\}$, and converges uniformly to zero as $n\to\infty$
on each compact subset of $B(0,R)$.
\end{lemma}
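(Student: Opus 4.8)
The plan is to prove Lemma~\ref{lem:fn} by writing $f_n(z)$ as a single power series in $z$ and showing that each coefficient tends to zero fast enough that the series converges uniformly on compact subsets of $B(0,R)$. From~\eqref{eq:Dn}, the logarithmic derivatives have the expansions $\tfrac{D_n'(z)}{D_n(z)}=-\sum_{k\ge0}T_n(k+1)z^k$ and $\tfrac{D_{i,n}'(z)}{D_{i,n}(z)}=-\sum_{k\ge0}T_{i,n}(k+1)z^k$, so that
\begin{equation*}
f_n(z)=-\sum_{k=0}^{\infty}\Bigl(T_n(k+1)-\sum_{i=1}^p T_{i,n}(k+1)\Bigr)z^k\ .
\end{equation*}
First I would expand $T_n(k)=\int S_n(x_1,x_2)\cdots S_n(x_k,x_1)\,d\bs x$ by inserting the partition $\mathbf{1}=\sum_{i=1}^p\mathbf{1}_{\Delta_{i,n}}$ at each variable (recall $S_n(x,y;\bs\lambda)$ vanishes unless $x\in\cup_i\Delta_{i,n}$, so only these indices contribute): this decomposes $T_n(k)$ into a sum over index strings $(i_1,\dots,i_k)\in\{1,\dots,p\}^k$ of cyclic integrals $\int_{\Delta_{i_1,n}\times\cdots\times\Delta_{i_k,n}}S_n(x_1,x_2)\cdots S_n(x_k,x_1)\,d\bs x$. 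The $p$ ``diagonal'' strings with $i_1=\cdots=i_k=i$ reproduce exactly $T_{i,n}(k)$, so that $T_n(k)-\sum_i T_{i,n}(k)$ is the sum over all \emph{non-constant} index strings. The point is that any non-constant cyclic string must contain at least two ``index changes'', i.e.\ at least two consecutive pairs $(i_j,i_{j+1})$ with $i_j\ne i_{j+1}$ (cyclically), each of which forces a cross term whose sup-norm is controlled by Proposition~\ref{prop:ray}.

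Next I would bound a single cyclic integral associated with a string containing $s\ge2$ index changes. Taking absolute values inside and using $\int_{\Delta_{j,n}}|S_n|^{(a)}(x,u)|S_n|^{(b)}(u,y)\,du$-type concatenation together with the iterated-kernel estimates of Proposition~\ref{prop:estimees-M} and the factor widths $|\Delta_{i,n}|\le C n^{-\kappa_i}$, one gets that each ``diagonal run'' of length contributes a factor $\beta^{\#}n^{\kappa_i}$ while each of the $s$ index changes contributes a gain $Cn^{1-(\kappa_{i_j}+\kappa_{i_{j+1}})/2}$ instead of the full $n^{\kappa}$; multiplying the widths $n^{-\kappa_i}$ against these bounds telescopes so that each cyclic integral over a string with $s$ changes is $O\!\bigl(\beta^{k}\,n^{-s\,\eta}\bigr)$ for some fixed $\eta>0$ depending on the minimal value of $\tfrac{\kappa_m+\kappa_\ell}{2}-\max(\kappa_m,\kappa_\ell)$-type exponent gaps, uniformly over strings of length $k$, after also absorbing the small $\epsilon$ from Proposition~\ref{prop:estimees-M}. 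Since $\kappa_1=\kappa_p=\tfrac23$ and $\kappa_i=1$ for $1<i<p$, one checks that for every pair $m\ne\ell$ the quantity $1+\epsilon-\tfrac{\kappa_m+\kappa_\ell}{2}$ is strictly less than $\min(\kappa_m,\kappa_\ell)$ once $\epsilon$ is small, which is exactly what makes the cross terms subleading; summing over the at most $p^k$ strings of length $k$ and keeping $\beta p^{1/k}<R^{-1}$-type control yields $|T_n(k+1)-\sum_i T_{i,n}(k+1)|\le A\,\rho^{k}\,\delta_n$ with $\rho<1/R$ and $\delta_n\to0$.

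With such coefficient bounds, $f_n(z)=-\sum_{k\ge0}c_{n,k}z^k$ with $|c_{n,k}|\le A\rho^k\delta_n$ has radius of convergence at least $1/\rho>R$, so it is holomorphic on $B(0,R)$, and for $|z|\le r<R$ one has $|f_n(z)|\le A\delta_n\sum_{k\ge0}(\rho r)^k\to0$, giving uniform convergence to zero on each compact subset of $B(0,R)$; holomorphy of the finite-$n$ objects $D_n$, $D_{i,n}$ is already recorded after~\eqref{eq:Tin}. The main obstacle will be the bookkeeping in the second paragraph: organizing the sum over index strings so that the width factors $n^{-\kappa_i}$ are correctly paired with the iterated-kernel bounds and verifying that \emph{every} non-constant cyclic string genuinely carries at least two index-change factors (the cyclic ``wrap-around'' has to be handled, and a string that changes index only once cannot close up cyclically), so that the net exponent is bounded away from the leading exponent uniformly in $k$; this is where Propositions~\ref{prop:ray} and~\ref{prop:estimees-M} must be invoked with care, in particular choosing $\beta$ just above $R^{-1}$ and $\epsilon$ small enough that the geometric series still converges past radius $R$.
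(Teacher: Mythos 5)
Your outline—power-series expansion of $f_n$ via the logarithmic derivatives, decomposition of $T_n(k)$ into the diagonal sum $\sum_i T_{i,n}(k)$ plus cross terms over non-constant index strings, and bounding the cross terms via Propositions~\ref{prop:ray} and \ref{prop:estimees-M} so that the exponent gap $\kappa_m+\kappa_\ell-1\geq\tfrac13$ produces an $n^{-\gamma}$ decay—is the same route the paper takes, and your closing argument (coefficients $\le A\rho^k\delta_n$ with $\rho<1/R$, $\delta_n\to0$, hence holomorphy on $B(0,R)$ and uniform convergence on its compacts) is exactly right. The difference is in how the cross terms are organized, and that is precisely the step you yourself flag as the ``main obstacle'' and leave unresolved. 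You propose to track, string by string, the number $s\ge2$ of index changes and accumulate a gain $n^{-s\eta}$; this would require compounding the cross-term estimate through every change, which Proposition~\ref{prop:estimees-M} does not directly supply (the iterated-kernel cross bound ${\cal M}^{(k)}_{m\times\ell,n}\le C\beta^{k-1}n^{1+\epsilon-(\kappa_m+\kappa_\ell)/2}$ already swallows $k-1$ factors and gives a single, not compounded, decay). The paper sidesteps this combinatorics: by cyclic invariance of the trace integrand $j_{n,k}(\sigma)$ (circular shifts leave it unchanged), \emph{every} non-constant string may be rotated to the form $(m,\ell,\tilde\sigma)$ with $m\ne\ell$, at the cost of a factor $k$; one then re-sums over $\tilde\sigma$ using identity~\eqref{eq:identite} to reconstruct the iterated kernel $|S_n|^{(k-1)}(x_2,x_1)$, and a single application of Proposition~\ref{prop:estimees-M} (cross case, $\ell\ne m$) together with the widths $|\Delta_{m,n}||\Delta_{\ell,n}|\asymp n^{-\kappa_m-\kappa_\ell}$ already yields $|s_n(k)|\le k\beta^{k-2}\cdot\mathrm{const}\cdot n^{-2/3+\epsilon}$. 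That one-shot bound is all you need; the refinement in $s$ you were aiming for is neither necessary nor cleanly available from the stated propositions. If you replace your paragraph 2 with this cyclic-shift plus iterated-kernel re-summation, the rest of your argument goes through verbatim.
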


\begin{proof}
Denote by $\xi_{i}^{(n)}(x):=\lambda_i {\bs 1}_{\Delta_{i,n}}(x)$ and
recall that $T_n(k)$ is defined by \eqref{eq:Tn}. Using the identity
\begin{equation}\label{eq:identite}
\prod_{m=1}^k \sum_{i=1}^p a_{i m} = \sum_{\sigma\in \{1,\cdots\!,p\}^k} \prod_{m=1}^k a_{\sigma(m) m}, 
\end{equation}
where $a_{im}$ are complex numbers, $T_n(k)$ writes ($k\geq 2$):
\begin{eqnarray}
  T_{n}(k) &=& \int_{\mR^k}\left(\prod_{m=1}^{k} \sum_{i=1}^p  \xi_i^{(n)}(x_m)\right)K_n(x_1,x_2)\cdots K_n(x_k,x_1)dx_1\cdots dx_k\:,\nonumber\\
  &=&\sum_{\sigma\in\{1,\cdots, p\}^k} j_{n,k}(\sigma)\:,
\label{eq:Tndev}
\end{eqnarray}
where we defined
\begin{equation}
\label{eq:defjnk}
j_{n,k}(\sigma) := \int_{\mR^k}\left(\prod_{m=1}^{k}\xi_{\sigma(m)}^{(n)}(x_m)\right)K_n(x_1,x_2)\cdots K_n(x_k,x_1)dx_1\cdots dx_k\:.
\end{equation}
We split the sum in the right-hand side into two subsums. The first
is obtained by gathering the terms with $k$-uples $\sigma=(i,i,\cdots ,i)$ for $1\leq i\leq p$ and writes:
$$
\sum_{i=1}^p \int_{\mR^k}\left(\prod_{m=1}^{k}\lambda_i {\bs 1}_{\Delta_{i,n}}(x_m)\right)K_n(x_1,x_2)\cdots K_n(x_k,x_1)dx_1\cdots dx_k = \sum_{i=1}^p T_{i,n}(k)\:,
$$
where $T_{i,n}(k)$ is defined by \eqref{eq:Tin}. The remaining sum
consists of those terms for which there exists at least one
couple $(m,\ell)\in\{1,\cdots,k\}^2$ such that
$\sigma(m)\neq\sigma(\ell)$. Let
$$
{\cal S} = \left\{ \sigma \in \{1,\cdots, p\}^k \: :\: \exists
  (m,\ell)\in\{1,\cdots,k\}^2, \sigma(m)\neq \sigma(\ell) \right\}\:,
$$
we obtain $T_n(k) = \sum_{i=1}^p T_{i,n}(k) + s_n(k)$ where
$$
s_n(k) := \sum_{\sigma\in{\cal S}} j_{n,k}(\sigma)\:,
$$
for each $k\geq 2$.
For each $q\in\{1,\dots,k-1\}$, denote by $\pi_q$ the following permutation for any $k$-uplet $(a_1,\dots, a_k)$:
$$
\pi_q(a_1,\dots, a_k) = (a_q,a_{q+1},\dots,a_k,a_1,\dots,a_{q-1})\:.
$$
In other words, $\pi_q$ operates a circular shift of $q-1$ elements to the left.
Clearly, any $k$-uple $\sigma\in{\cal S}$ can be written as $\sigma= \pi_q(m,\ell,\tilde \sigma)$
for some $q\in \{1,\dots, k-1\}$, $(m,\ell)\in\{1,\dots,p\}$ such that $m\neq \ell$, and 
$\tilde\sigma \in\{1,\dots, p\}^{k-2}$. Thus, 
\begin{eqnarray*}
  |s_{n}(k)| &\leq& \sum_{q=1}^{k-1} \sum_{\substack{(m,\ell)\in\{1\cdots p\}^2 \\  m \neq \ell}}\sum_{\tilde \sigma \in\{1\cdots p\}^{k-2}} |j_{n,k}(\pi_q(m,\ell,\tilde\sigma))| \:.
\end{eqnarray*}
From~(\ref{eq:defjnk}), function $j_{n,k}$ is invariant up to any
circular shift $\pi_q$, so that $j_{n,k}(\sigma)$ coincides with
$j_n(m,\ell,\tilde \sigma)$ for any $\sigma= \pi_q(m,\ell,\tilde
\sigma)$ as above. Therefore,

%%%%

\begin{eqnarray*}
  |s_{n}(k)|& \leq & \sum_{q=1}^{k-1} \sum_{\substack{(m,\ell)\in\{1\cdots p\}^2 \\  m \neq \ell}}\sum_{\tilde \sigma \in\{1\cdots p\}^{k-2}} |j_{n,k}(m,\ell,\tilde\sigma)|\\
&\le & k \sum_{\substack{(m,\ell)\in\{1\cdots p\}^2 \\  m \neq \ell}}
  \sum_{\tilde \sigma \in\{1\cdots p\}^{k-2}} \int_{\mR^k}|\xi_{m}^{(n)}(x_{1})\xi_{\ell}^{(n)}(x_{2})\xi_{\tilde \sigma(1)}^{(n)}(x_{3})\cdots \xi_{\tilde \sigma(k-2)}^{(n)}(x_{k})| \\
&& \quad ×   |K_n(x_1,x_2)\cdots K_n(x_k,x_1)|dx_1\cdots dx_k
\end{eqnarray*}
The latter writes
\begin{eqnarray*}
|s_{n}(k)| &=& k \sum_{\substack{1\leq m,\ell\leq p\\ m\neq \ell}}
 \int_{\Delta_{m,n}× \Delta_{\ell,n}} \left| K_n(x_1,x_2) \xi_{m}^{(n)}(x_{1})\xi_{\ell}^{(n)}(x_{2})\right| \\
 \lefteqn{ × \left(\int_{\mR^{k-2}}{\sum_{\tilde \sigma\in\{1\cdots p\}^{k-2}}\left|\xi_{\tilde \sigma(1)}^{(n)}(x_{3})\cdots \xi_{\tilde \sigma(k-2)}^{(n)}(x_{k})\right|\left|
 K_n(x_2,x_3)\cdots K_n(x_k,x_1)\right|dx_3\cdots dx_k }\right) dx_1dx_2}\\
&=& k \sum_{\substack{1\leq m,\ell\leq p \\ m\neq \ell}}
 \int_{\Delta_{m,n}× \Delta_{\ell,n}} \left|K_n(x_1,x_2)× \sum_{i=1}^p\xi_{i}^{(n)}(x_{1})\right|× \sum_{i=1}^p \left|\xi_{i}^{(n)}(x_{2})\right| \\
 \lefteqn{ ×  \left(\int_{\mR^{k-2}}{\sum_{\tilde \sigma\in\{1\cdots p\}^{k-2}}\left|\xi_{\tilde \sigma(1)}^{(n)}(x_{3})\cdots \xi_{\tilde \sigma(k-2)}^{(n)}(x_{k})\right|
 \left|K_n(x_2,x_3)\cdots K_n(x_k,x_1)\right|dx_3\cdots dx_k}\right) dx_1dx_2.}
 \end{eqnarray*}

%%%%%

% \begin{multline*}
%   |s_{n}(k)|\leq k \sum_{\substack{(m,\ell)\in\{1\cdots p\}^2 \\  m \neq \ell}}
%   \sum_{\tilde \sigma \in\{1\cdots p\}^{k-2}} \int_{\mR^k}|\xi_{m}^{(n)}(x_{1})\xi_{\ell}^{(n)}(x_{2})\xi_{\tilde \sigma(1)}^{(n)}(x_{3})\cdots \xi_{\tilde \sigma(k-2)}^{(n)}(x_{k})| \\
% ×   |K_n(x_1,x_2)\cdots K_n(x_k,x_1)|dx_1\cdots dx_k
% \end{multline*}
%  \begin{multline*}
% \phantom{ s_n(k)}=  k \sum_{\substack{1\leq m,\ell\leq p\\ m\neq \ell}}
%  \int_{\Delta_{m,n}× \Delta_{\ell,n}} \left| K_n(x_1,x_2) \xi_{m}^{(n)}(x_{1})\xi_{\ell}^{(n)}(x_{2})\right| \\
%  × \left(\int_{\mR^{k-2}}{\sum_{\tilde \sigma\in\{1\cdots p\}^{k-2}}\left|\xi_{\tilde \sigma(1)}^{(n)}(x_{3})\cdots \xi_{\tilde \sigma(k-2)}^{(n)}(x_{k})\right|\left|
%  K_n(x_2,x_3)\cdots K_n(x_k,x_1)\right|dx_3\cdots dx_k }\right) dx_1dx_2
%  \end{multline*}
%  \begin{multline*}
%  \phantom{s_n(k)}= k \sum_{\substack{1\leq m,\ell\leq p \\ m\neq \ell}}
%  \int_{\Delta_{m,n}× \Delta_{\ell,n}} \left|K_n(x_1,x_2)× \sum_{i=1}^p\xi_{i}^{(n)}(x_{1})\right|× \sum_{i=1}^p \left|\xi_{i}^{(n)}(x_{2})\right| \\
% ×  \left(\int_{\mR^{k-2}}{\sum_{\tilde \sigma\in\{1\cdots p\}^{k-2}}\left|\xi_{\tilde \sigma(1)}^{(n)}(x_{3})\cdots \xi_{\tilde \sigma(k-2)}^{(n)}(x_{k})\right|
%  \left|K_n(x_2,x_3)\cdots K_n(x_k,x_1)\right|dx_3\cdots dx_k}\right) dx_1dx_2.
%  \end{multline*}

It remains to notice that 
\begin{eqnarray*}
  \lefteqn{\sum_{i=1}^p \left|\xi_{i}^{(n)}(x_{2})\right|
    \int_{\mR^{k-2}}{\sum_{\tilde \sigma\in\{1\cdots p\}^{k-2}}\prod_{m=3}^k\left|\xi_{\tilde \sigma(m-2)}^{(n)}(x_{m})\right|
      \left|K_n(x_2,x_3)\cdots K_n(x_k,x_1)\right|dx_3\cdots dx_k}}\\
  &\stackrel{(a)}{=}&\sum_{i=1}^p \left|\xi_{i}^{(n)}(x_{2})\right| \int_{\mR^{k-2}} \left(\prod_{m=3}^k \sum_{i=1}^{p} 
\left|\xi_{i}^{(n)}(x_{m})\right| \right)\left|K_n(x_2,x_3)\cdots K_n(x_k,x_1)\right|dx_3\cdots dx_k\\
&=& \int_{\mR^{k-2}} |S_n(x_2,x_3) S_n(x_3,x_4)\cdots S_n(x_k,x_1)| dx_3\cdots dx_k\\
&\stackrel{(b)}=& |S_n|^{(k-1)}(x_2,x_1)\ ,
\end{eqnarray*}%ù
where $(a)$ follows from \eqref{eq:identite}, and $(b)$ from the mere
definition of the iterated kernel \eqref{eq:def-iterated}. Thus, for
$k\geq 2$, the following inequality holds true:
\begin{equation}
  |s_n(k)|\leq k
\sum_{\substack{1\leq m,\ell\leq p \\ 
m\neq \ell}} \int_{\Delta_{m,n}× \Delta_{\ell,n}} |S_n(x_1,x_2)|× |S_n|^{(k-1)}(x_2,x_1) dx_1dx_2 \ .
\label{eq:sn}
\end{equation}
For $k=1$, let $s_n(1)=0$ so that equation $T_{n}(k) = \sum_i
T_{i,n}(k)+ s_n(k)$ holds for every $k\geq1$.  

According to \eqref{eq:fn}, $f_n(z)$ writes:
$$
f_n(z)= - \sum_{k=1}^\infty s_n(k+1)z^k\ .
$$
Let us now prove that $f_n(z)$ is well-defined on the desired
neighbourhood of zero and converges uniformly to zero as $n\to\infty$. Let $\beta>R^{-1}$, then 
Propositions \ref{prop:ray} and \ref{prop:estimees-M}
yield:
\begin{eqnarray*}
  |s_n(k)|&\leq& k \sum_{\substack{1\leq m,\ell\leq p \\ m\neq \ell}} \int_{\Delta_{m,n}× \Delta_{\ell,n}} |S_n(x,y)| |S_n|^{(k-1)}(y,x) dxdy\ ,\\ %ù
  &\leq& k \sum_{\substack{1\leq m,\ell\leq p \\ m\neq \ell}} {\cal M}_{m× \ell,n} {\cal M}_{\ell× m,n}^{(k-1)} |\Delta_{m,n}| |\Delta_{\ell,n}|\ ,\\
  &\leq& k \: \beta^{k-2}\sum_{\substack{1\leq m,\ell\leq p \\ m\neq \ell}} C^2
  n^{\left(1-\frac{\kappa_m +\kappa_{\ell}}2\right)}n^{\left(1+\epsilon-\frac{\kappa_m +\kappa_{\ell}}2\right)} n^{-(\kappa_m+\kappa_{\ell})}× |\Delta_m \Delta_{\ell}|
  \ ,  \\
  &\leq& k \: \beta^{k-2}\sum_{\substack{1\leq m,\ell\leq p \\ m\neq \ell}} \frac{C^2\ |\Delta_{m}\Delta_{\ell}|}{n^{2(\kappa_m+\kappa_{\ell}-1)-\epsilon}}  \ ,\\
  &\stackrel{(a)}\leq& k \: \beta^{k-2} × \left( \max_{1\leq m\leq p} |\Delta_m|\right)^2 × \frac{p(p-1)C^2}{n^{\frac 23 -\epsilon}}\ ,
\end{eqnarray*}
where $(a)$ follows from the fact that
$\kappa_m+\kappa_{\ell}-1\geq \frac 13 $.  Clearly,
the power series $\sum_{k=1}^\infty (k+1)\beta^{k-1} z^k$ converges for
$|z|<\beta^{-1}$. As $\beta^{-1}$ is arbitrarily lower than $R$, this
implies that $f_n(z)$ is holomorphic in $B(0,R)$. Moreover, 
for each compact subset $K$ included in the open disk
$B(0,\beta^{-1})$ and for each $z\in K$,
$$
|f_n(z)| \leq \left( \sum_{k=1}^\infty  (k+1)\beta^{k-1} (\sup_{z\in K} |z| )^k\right)× 
\left( \max_{1\leq m\leq p} |\Delta_m|\right)^2 × \frac{p(p-1)C^2}{n^{\frac 23 -\epsilon}}\ .
%\frac{p(p-1)C^2}{\min_{1\leq m\leq p} |\Delta_m|^2}× \frac 1{n^{2(\frac 13 -\epsilon)}}\ .
$$
The right-hand side of the above inequality converges to zero as
$n\to\infty$. Thus, the uniform convergence of $f_n(z)$ to
zero on $K$ is proved; in particular, as $\beta^{-1}<R$, $f_n(z)$
converges uniformly to zero on $B(0,R)$. Lemma~\ref{lem:fn} is proved.

\end{proof}
\subsubsection{Convergence of $d_n$ to zero as $n\to \infty$}

In this section, $\bs \lambda\in \mC^p$ is fixed. We therefore drop
the dependence in $\bs\lambda$ in the notations. Consider function $F_n$
defined by:
\begin{equation}
  \label{eq:Fn}
  F_n(z) := \log\frac{D_{n}(z)}{\prod_{i=1}^pD_{i,n}(z)}\ ,
\end{equation}
where $\log$ corresponds to the principal branch of the logarithm and
$D_n$ and $D_{i,n}$ are defined in \eqref{eq:Dn}. As
$D_n(0)=D_{i,n}(0)=1$, there exists a neighbourhood of zero where
$F_n$ is holomorphic. Moreover, using Proposition
\ref{prop:DvoisZero}-3), one can prove that there exists a
neighbourhood of zero, say $B(0,\rho)$, where $F_n(z)$ is a normal
family. Assume that this neighbourhood is included in $B(0,R)$, where
$R$ is defined in Proposition \ref{prop:ray} and notice that in this
neighbourhood, $F'_n(z)=f_n(z)$ as defined in \eqref{eq:fn}. Consider
a compactly converging subsequence $F_{\phi(n)} \to F_{\phi}$ in
$B(0,\rho)$ (by compactly, we
mean that the convergence is uniform over any compact set ${\bs
  \Lambda} \subset B(0,\rho)$), then one has in particular $F'_{\phi(n)}(z) \to
F'_{\phi}$ but $F'_{\phi(n)}(z)=f_{\phi(n)}(z) \to 0$.
Therefore, $F_{\phi}$ is a constant over $B(0,\rho)$, in particular,
$F_{\phi}(z)=F_{\phi}(0)=0$. We have proved that every converging
subsequence of $F_n$ converges to zero in $B(0,\rho)$. This yields the
convergence (uniform on every compact of $B(0,\rho)$) of $F_n$ to zero
in $B(0,\rho)$. This yields the existence of a neighbourhood of zero,
say $B(0,\rho')$ where:
\begin{equation}\label{conv-ratio}
\frac{D_{n}(z)}{\prod_{i=1}^p D_{i,n}(z)} \xrightarrow[n\to\infty]{} 1
\end{equation}
uniformly on every compact of $B(0,\rho')$. Recall that $d_n(z)=D_{n}(z) - \prod_{i=1}^pD_{i,n}(z)$.

Combining \eqref{conv-ratio} with Proposition \ref{prop:DvoisZero}-3) yields
the convergence of $d_n(z)$ to zero in a small neighbourhood of zero.
Now, Proposition \ref{prop:DvoisZero}-1) implies that $d_n(z)$ is a normal
family in $\mathbb{C}$. In particular, every subsequence $d_{\phi(n)}$
compactly converges to a holomorphic function which coincides with 0 in a small
neighbourhood of the origin, and thus is equal to 0 over $\mathbb{C}$.
We have proved that 
$$
d_n(z) \xrightarrow[n\to\infty]{} 0,\qquad \forall z\in \mathbb{C}\ ,
$$
with $\bs\lambda\in \mC^p$ fixed.

\subsubsection{Convergence of the partial derivatives  
  of $\bs\lambda \mapsto d_n(1,\bs\lambda)$ to zero}

In order to establish Theorem \ref{theo:main}, we shall rely on Proposition
\ref{prop:representation} where the probabilities of interest are
expressed in terms of partial derivatives of Fredholm determinants.
We therefore need to establish that the partial derivatives of
$d_n(1,{\bs \lambda})$ with respect to $\bs\lambda$ converge to zero as well.  This is the aim of this
section.

In the previous section, we have proved that $\forall
(z,\bs\lambda)\in \mC^{p+1}$, $d_n(z,{\bs \lambda})\to 0$ as $n\to\infty$. In particular,
$$
d_n(1,{\bs \lambda})\to 0,\quad \forall {\bs \lambda}\in \mathbb{C}^p\ .
$$ 
We now prove the following facts (with a slight abuse of notation, denote $d_n({\bs \lambda})$ instead of 
$d_n(1,{\bs \lambda})$):
\begin{enumerate}
\item As a function of ${\bs \lambda}\in \mathbb{C}^p$, $d_n({\bs \lambda})$ is holomorphic.
\item The sequence $\left(\bs\lambda\mapsto  d_n(\bs\lambda) \right)_{n\geq 1}$ is a normal family on $\mathbb{C}^p$.  
\item The convergence $d_n({\bs \lambda})\to 0$ is uniform
  over every compact set ${\bs \Lambda} \subset \mathbb{C}^p$.
\end{enumerate}

Proof of Fact 1) is straightforward and is thus omitted. Proof of Fact
2) follows from Proposition \ref{prop:DvoisZero}-2).  Let us now turn
to the proof of Fact 3). As $(d_n)$ is a normal family, one can
extract from every subsequence a compactly converging one in
$\mathbb{C}^p$ (see for instance \cite[Theorem 1.13]{Ran86}). But for every ${\bs \lambda}\in \mathbb{C}^p$,
$d_n({\bs \lambda})\to 0$, therefore every compactly
converging subsequence converges toward 0. In particular, $d_n$ itself
compactly converges toward zero, which proves Fact 3).

In order to conclude the proof, it remains to apply standard results
related to the convergence of partial derivatives of compactly
converging holomorphic functions of several complex variables, as for
instance \cite[Theorem 1.9]{Ran86}. As $d_n({\bs \lambda})$
compactly converges to zero, the following convergence holds true: Let
$(\ell_1,\cdots, \ell_p)\in \mathbb{N}^p$, then
$$
\forall {\bs \lambda}\in \mathbb{C}^p,\qquad 
          \left(\frac{\partial}
 {\partial\lambda_1}\right)^{\ell_1}\cdots\left(\frac{\partial}{\partial\lambda_p}\right)^{\ell_p} d_n({\bs \lambda}) \xrightarrow[n\to\infty]{} 0\ .
$$
This, together with Proposition \ref{prop:representation}, completes the proof of Theorem \ref{theo:main}.

\appendix
\subsection{Proof of Proposition \ref{prop:representation}}\label{proof:representation}
Denote
by $E_n({\bs \ell},\bs \Delta)$ the probability that for every  $i\in \{1,\cdots, p\}$, the set
$\Delta_i$ contains exactly $\ell_i$ eigenvalues:
\begin{equation}\label{def-En}
E_n({\bs \ell},\bs \Delta)=\PP{{\cal N}(\Delta_1)=\ell_1,\cdots,{\cal
    N}(\Delta_p)=\ell_p}\ .
\end{equation} 
Let ${\cal P}_n(m)$ be the set of subsets of $\{1,\cdots,n\}$ with exactly $m$ elements.
If $A\in {\cal P}_n(m)$, then ${A}^c$ is its complementary
subset in $\{1,\cdots,n\}$. The mere definition of $E_n(\bs \ell,\bs
\Delta)$ yields:
$$
E_n(\bs \ell,\bs \Delta) = \int_{\mR^n} \sum_{\substack{(A_1,\cdots,
    A_p) \in\\ {\cal P}_n(\ell_1)× \cdots × {\cal P}_n(\ell_p)}}
\prod_{k=1}^p \left\{ \prod_{i\in
    A_k}\mathbf{1}_{\Delta_k}(x_i)\prod_{j\in
    A_k^c}(1-\mathbf{1}_{\Delta_k}(x_j))\right\} p_n(x_1\cdots
x_n)dx_1\cdots dx_n
$$
Using the following formula:
%\footnote{Poly Faraut page 41.}
$$
\frac{1}{\ell!}\left(-\frac{d}{d\lambda}\right)^{\ell}
\prod_{i=1}^n(1-\lambda \alpha_i) = \sum_{A\in {\cal
    P}_n(\ell)}\prod_{i\in A}\alpha_i\prod_{j\in A^c}(1-\lambda
\alpha_j)\ ,
$$
we obtain:
%\footnote{Intervertion dérivées/intégrales : toutes fonctions continuement dérivables et bornées sur $\mR$.}
\begin{eqnarray*}
  E_n(\bs \ell,\bs \Delta) &=& 
\frac{1}{\ell_1!\cdots \ell_p!}\left(-\frac{\partial}{\partial\lambda_1}\right)^{\ell_1}
\cdots\left(-\frac{\partial}{\partial\lambda_p}\right)^{\ell_p}
  \Gamma(\bs\lambda,\bs\Delta)\bigg|_{\lambda_1=\cdots=\lambda_p=1}
\end{eqnarray*}
where
$$
\Gamma(\bs\lambda,\bs\Delta)=\int_{\mR^n} \prod_{i=1}^n(1-\lambda_1\mathbf{1}_{\Delta_1}(x_i))
\cdots (1-\lambda_p\mathbf{1}_{\Delta_p}(x_i))\, p_n(x_1\cdots x_n)\,dx_1\cdots dx_n\ .
$$
Expanding the inner product and using the fact that the $\Delta_k$'s are pairwise disjoint yields:
$$
(1-\lambda_1\mathbf{1}_{\Delta_1}(x))
\cdots (1-\lambda_p\mathbf{1}_{\Delta_p}(x))
= \left(1-\sum_{k=1}^p \lambda_k\mathbf{1}_{\Delta_k}(x)\right)\ .
$$
Thus
\begin{eqnarray*}
  \Gamma(\bs\lambda,\bs\Delta)&=&  \int_{\mR^n} \prod_{i=1}^n
  \left(1-\sum_{k=1}^p \lambda_k\mathbf{1}_{\Delta_k}(x_i)\right)\, p_n(x_1\cdots x_n)\, dx_1\cdots dx_n\ ,\\
  &\stackrel{(a)}=& 1+ \int_{\mR^n} \sum_{m=1}^n (-1)^m  \sum_{A\in {\cal P}_n(m)}\, \prod_{i\in A}\left(\sum_{k=1}^p 
    \lambda_k\mathbf{1}_{\Delta_k}(x_i)\right)\  p_n(x_1\cdots x_n)\, dx_1\cdots dx_n\ ,\\
  &=& 1+ \sum_{m=1}^n (-1)^m  \sum_{A\in {\cal P}_n(m)}\int_{\mR^n}
  \prod_{i\in A}\left(\sum_{k=1}^p \lambda_k\mathbf{1}_{\Delta_k}(x_i)\right)\, p_n(x_1\cdots x_n)
\,dx_1\cdots dx_n\ ,\\
  &\stackrel{(b)} =& 1+ \sum_{m=1}^n (-1)^m  \binom{n}{m}\int_{\mR^n}
  \prod_{i=1}^m\left(\sum_{k=1}^p \lambda_k\mathbf{1}_{\Delta_k}(x_i)\right)\, p_n(x_1\cdots x_n)\,dx_1\cdots dx_n\ ,\\
  &\stackrel{(c)} =& 1+ \sum_{m=1}^n  \frac{(-1)^m}{m!}\int_{\mR^m}
\prod_{i=1}^m\left(\sum_{k=1}^p \lambda_k\mathbf{1}_{\Delta_k}(x_i)\right)  \det\left\{  K_n(x_i,x_j) \right\}_{1\leq i,j\leq  m}
dx_1\cdots dx_m\ ,
\end{eqnarray*}
where $(a)$ follows from the expansion of $\prod_i \left(1-\sum_k
  \lambda_k\mathbf{1}_{\Delta_k}(x_i)\right)$, $(b)$ from the fact
that the inner integral in the third line of the previous equation
does not depend upon $E$ due to the invariance of $p_n$
with respect to any permutation of the $x_i$'s, and $(c)$ follows from
the determinantal representation \eqref{eq:marginals}.

Therefore, $\Gamma(\bs\lambda,\bs\Delta)$ writes:
\begin{equation}
  \Gamma(\bs\lambda,\bs\Delta)
= 1+ \sum_{m=1}^n   \frac{(-1)^m}{m!}\int_{\mR^m}
 \det\left\{  S_n(x_i,x_j;\bs\lambda,\bs\Delta) \right\}_{1\leq i,j\leq m}dx_1\cdots dx_m \label{eq:Gld}
\end{equation}
where $S_n(x,y;\bs\lambda,\bs\Delta)$ is the kernel defined in
(\ref{eq:Sn}).  As the operator $S_n(\bs \lambda,\bs\Delta)$ has
finite rank $n$, (\ref{eq:Gld}) coincides with the Fredholm determinant
$\det(1-S_n(\bs \lambda,\bs\Delta))$ (see \cite{Tri57} for details). Proof of Proposition
\ref{prop:representation} is completed.

\subsection{Proof of Proposition~\ref{prop:ray}}\label{proof-prop:ray}

In the sequel, $C>0$ will be a constant independent from $n$, but
whose value may change from line to line.

First consider the case $i=j$.
Denote by $S_{\mu_i}(x,y)$ the following limiting kernel:
$$
S_{\mu_i}(x,y) :=\left\{
  \begin{array}{ll}
   {\dsp  \frac{\sin \pi\rho(\mu_i)(x-y)}{\pi(x-y)}}& \textrm{ if } -2 < \mu_i < 2 \\
    {\dsp \frac{Ai(x)Ai'(y)-Ai(y)Ai'(x)}{x-y}}&\textrm{ if } \mu_i=2, \\
    {\dsp \frac{Ai(-x)Ai'(-y)-Ai(-y)Ai'(-x)}{-x+y}}&\textrm{ if } \mu_i=-2, \\
  \end{array}\right.
$$
Proposition \ref{prop:convergence-kernel} implies that
$n^{-\kappa_i}K_n(\mu_i+x/n^{\kappa_i},\mu_i+y/n^{\kappa_i})$
converges uniformly to $S_{\mu_i}(x,y)$ on every compact subset of
$\mR^2$, where $\kappa_i$ is defined by~(\ref{eq:deltai}). Moreover, 
$S_{\mu_i}(x,y)$ being bounded on every compact subset of $\mR^2$, there
exists a constant $C_i$ such that:
\begin{eqnarray}
  {\cal M}_{i× i,n}(\bs \Lambda)&=& \left( 
\sup_{\bs \lambda \in \bs \Lambda}|\lambda_i| \right) 
\sup_{(x,y)\in \Delta_{i,n}^2} \left| K_n\left(x,y\right)\right|
\quad =\quad 
\left( \sup_{\bs \lambda \in \bs \Lambda}|\lambda_i| \right) \sup_{(x,y)\in \Delta_{i}^2} \left| K_n\left(\mu_i+\frac{x}{n^{\kappa_i}},\mu_i+\frac{y}{n^{\kappa_i}}\right)\right| \nonumber\\
  &\leq& \left( \sup_{\bs \lambda \in \bs \Lambda} |\lambda_i|\right) n^{\kappa_i} \left(\sup_{(x,y)\in \Delta_{i}^2} \left| \frac{1}{n^{\kappa_i}} 
      K_n\left(\mu_i+\frac{x}{n^{\kappa_i}},\mu_i+\frac{y}{n^{\kappa_i}}\right) -S_{\mu_i}(x,y)\right| +
    \sup_{(x,y)\in \Delta_{i}^2}\left| S_{\mu_i}(x,y)\right|\right) \nonumber\\
  &\leq& n^{\kappa_i} C_i\:,
\label{eq:boundMin}
\end{eqnarray}
%Using the fact that $|\Delta_{i,n}|=n^{-\delta_i}|\Delta_i|$, we conclude that ${\cal M}_{i× i,n}|\Delta_{i,n}|\leq C_i$. 
It remains to take $R$ as $R^{-1} = \max (C_1,\cdots, C_p)$ to get the pointwise or uniform estimate.\\

Consider now the case where $i\neq j$. Using notation $\kappa_i$,
inequalities~(\ref{eq:psibulk}) and~(\ref{eq:psiedge}) can be conveniently merged as follows:
There exists a constant $C$ such that for $1\leq i\leq p$, 
\begin{equation}\label{eq:ineg-generale}
\sup_{x\in\Delta_{i,n}} \left|\psi_{n-k}^{(n)}(x)\right| \leq n^{\frac{1-\kappa_i}{2}}C\:.
\end{equation}
For $n$ large enough, we obtain, using~\eqref{eq:kernel}:
\begin{eqnarray*}
  {\cal M}_{i× j,n} (\bs \Lambda) &\stackrel{(a)}{\leq}&  \left( 
\sup_{\bs \lambda \in \bs \Lambda}|\lambda_i| \right) \sup_{(x,y)\in \Delta_{i,n}×\Delta_{j,n}} \frac{|\psi_n^{(n)}(x)| |\psi_{n-1}^{(n)}(y)|+ |\psi_n^{(n)}(y)| |\psi_{n-1}^{(n)}(x)|}{|x-y|}\ ,\\
  &\stackrel{(b)}{\leq}&   \left( \sup_{\bs \lambda \in \bs \Lambda} |\lambda_i|\right) n^{\frac{1-\kappa_i}{2}+\frac{1-\kappa_j}{2}}\frac{2C^2}{\inf_{(x,y)\in \Delta_{i,n}×\Delta_{j,n}}|x-y|}\ ,\\
  &\stackrel{(c)}{\leq}&  C \: n^{1-\frac{\kappa_i+\kappa_j}{2}}\ ,
\end{eqnarray*}
where $(a)$ follows from \eqref{eq:kernel}, $(b)$ from \eqref{eq:ineg-generale} and $(c)$ from the fact that 
$$
\liminf_{n\to\infty} \inf_{(x,y)\in \Delta_{i,n}×\Delta_{j,n}}|x-y| = |\mu_i -\mu_j|>0.
$$
Proposition \ref{prop:ray} is proved.

\subsection{Proof of Proposition \ref{prop:estimees-M}}\label{proof-prop:estimees-M}

Let ${\bs \Lambda}= \{ \bs \lambda\}$ be fixed. We drop, in the rest
of the proof, the dependence in $\bs \lambda$ in the notations. The
mere definition of $|S_n|^{(k)}$ yields:
\begin{eqnarray*}
  0\leq |S_n|^{(k)}(x,y) &\leq& \int_{\mR}|S_n(x,u)|× |S_n|^{(k-1)}(u,y)du\\
  &=& \sum_{i=1}^p \int_{\Delta_{i,n}}|S_n(x,u)|× |S_n|^{(k-1)}(u,y)du 
\end{eqnarray*}
From the above inequality, the following is straightforward:
\begin{eqnarray*}
\forall (x,y)\in\Delta_{m,n}×\Delta_{\ell,n},\;\;
|S_n|^{(k)}(x,y) &\leq& \sum_{i=1}^p |\Delta_{i,n}| {\cal M}_{m× i,n} {\cal M}_{i× \ell,n}^{(k-1)}\ .
\end{eqnarray*}
Using Proposition~\ref{prop:ray}, we obtain:
\begin{eqnarray}\label{eq:Munk} {\cal M}_{m× \ell,n}^{(k)}&\leq& R^{-1} {\cal M}_{m×
    \ell,n}^{(k-1)} + \alpha  \sum_{i\neq m}
  n^{(1- \frac{\kappa_m+ 3\kappa_i}2)} {\cal M}_{i× \ell,n}^{(k-1)}\ ,
\end{eqnarray}
where $\alpha:=\max(C|\Delta_1|,\cdots, C|\Delta_p|)$.
Now take $\beta> R^{-1}$ and $\epsilon\in (0,\frac 13)$.
Property~\eqref{eq:Knbd} holds for $k=1$ since 
$$
{\cal M}_{m× m,n}\leq R^{-1} n^{\kappa_m} \leq \beta n^{\kappa_m}\quad \textrm{and}\quad 
{\cal M}_{m× \ell,n} \leq C n^{\left(1-\frac{\kappa_m +\kappa_{\ell}}2\right)}
\leq C n^{\left(1 + \epsilon -\frac{\kappa_m +\kappa_{\ell}}2\right)}
$$
for every $m\neq \ell$ by Proposition ~\ref{prop:ray}. Assume that the same holds 
at step $k-1$. 

Consider first the case where $m=\ell$. Eq.~(\ref{eq:Munk}) becomes
\begin{eqnarray*}
%{\cal M}_{m× l,n}^{(k)} &\leq&  R^{-1} {\cal M}_{m× l,n}^{(k-1)} +\sum_{i\neq m} \frac{\alpha}{n^{\eta(m,i)-\delta_m}} {\cal M}_{i× l,n}^{(k-1)}\\
{\cal M}_{m× m,n}^{(k)} &\leq&  R^{-1}  \beta^{k-1} n^{\kappa_m} +\alpha C \beta^{k-2} 
\sum_{i\neq m} n^{(1- \frac{\kappa_m}2 -\frac{3\kappa_i}2)} n^{(1+\epsilon - \frac{\kappa_i}2 -\frac{\kappa_{m}}2)} \\
 &\leq& \beta^{k} n^{\kappa_m} \left(\frac{R^{-1}}{\beta} +\sum_{i\neq m} \frac{\alpha C}{\beta^2} 
n^{\left( 2+\epsilon -2\kappa_m - 2\kappa_i\right)}\right)  \\
 &\leq& \beta^{k} n^{\kappa_m}\qquad \textrm{for $n$ large enough}\ ,
\end{eqnarray*}
where the last inequality follows from the fact that
$2+ \epsilon -2\kappa_m -2 \kappa_i<0$, which implies that
$n^{2+ \epsilon -2\kappa_m -2 \kappa_i}\to 0$, which
in turn implies that the term inside the parentheses is lower than one
for $n$ large enough.  

Now if $m\neq \ell$, Eq.~(\ref{eq:Munk}) becomes:
\begin{eqnarray*} {\cal M}_{m× \ell,n}^{(k)} &\leq& R^{-1} C
    \beta^{k-2} n^{\left(1+\epsilon -\frac{\kappa_{\ell}+\kappa_m}2\right)}
    +\alpha\beta^{k-1} n^{\left(1 -\frac{\kappa_{\ell}+\kappa_m}2\right)}       
  +\sum_{i\neq m,\ell} C\alpha\beta^{k-2} n^{\left( 1-\frac{\kappa_m+ 3\kappa_i}2\right)} 
n^{\left(1+\epsilon -\frac{\kappa_i+\kappa_{\ell}}2\right)} \\
  &=& C \beta^{k-1} n^{\left(1+\epsilon -\frac{\kappa_{\ell}+\kappa_m}2\right)}
\left(
    \frac{R^{-1}}{\beta}+\frac{\alpha}{C n^{\epsilon}}
    +\frac{\alpha }{\beta}\sum_{i\neq m,\ell} n^{1-2\kappa_i} \right)\\
  &\leq& C \beta^{k-1} n^{\left(1+\epsilon -\frac{\kappa_{\ell}+\kappa_m}2\right)}
\left(
    \frac{R^{-1}}{\beta}+\frac{\alpha}{C n^{\epsilon}}
    +\frac{\alpha p^2}{\beta n^{\frac 13}} \right)
\\
  &\leq& C
    \beta^{k-1} n^{\left(1+\epsilon -\frac{\kappa_{\ell}+\kappa_m}2\right)} \ ,
\end{eqnarray*}
where the last inequality follows from the fact that the term inside
the parentheses is lower than one for $n$ large enough.
Therefore,~(\ref{eq:Knbd}) holds for each $k\geq 1$ and for $n$ large enough.

\subsection{Proof of Proposition \ref{prop:DvoisZero}}\label{proof-prop:DvoisZero}

Define $U_n(k,\bs\lambda):=\int_{\mR^k} \left|\det \left\{
    S_n(x_i,x_j;\bs\lambda)\right\}_{i,j=1\cdots k}\right|dx_1\cdots
dx_k$.  Using Hadamard's inequality,
\begin{eqnarray}
 U_n(k,\bs\lambda) &\leq& \int_{\mR^k} \prod_{i=1}^k \sqrt{\sum_{j=1}^k|S_n(x_i,x_j;\bs\lambda)|^2 }dx_1\cdots dx_k \nonumber\\
&\leq& \int_{\mR^k} \prod_{i=1}^k \sqrt{\sum_{j=1}^k\left|\sum_{m=1}^p\lambda_m{\bs 1}_{\Delta_{m,n}}(x_i)\right|^2|K_n(x_i,x_j)|^2 }dx_1\cdots dx_k \nonumber
\end{eqnarray}
Therefore, 
\begin{eqnarray}
  U_n(k,\bs\lambda) &\leq& \int_{\mR^k} \prod_{i=1}^k \left(\sum_{m=1}^p \lambda_m {\bs 1}_{\Delta_{m,n}}(x_i)\right)\sqrt{\sum_{j=1}^k|K_n(x_i,x_j)|^2} dx_1\cdots dx_k \nonumber\\
  &=& \int_{\mR^k} \sum_{\sigma\in\{1\cdots p\}^k} \prod_{i=1}^k \lambda_{\sigma(i)} {\bs 1}_{\Delta_{\sigma(i),n}}(x_i)\sqrt{\sum_{j=1}^k|K_n(x_i,x_j)|^2} dx_1\cdots dx_k \nonumber\\
  &=& \sum_{\sigma\in\{1\cdots p\}^k}  \int_{\mR^k} \prod_{i=1}^k\sqrt{\sum_{j=1}^k| \lambda_{\sigma(i)} {\bs 1}_{\Delta_{\sigma(i),n}}(x_i)K_n(x_i,x_j)|^2} dx_1\cdots dx_k \:.\nonumber
\end{eqnarray}
In the above equation, integral $\int_{\mR^k}$ clearly reduces to an integral on the
set $\Delta_{\sigma(1),n}× \cdots × \Delta_{\sigma(p),n}$. Thus,
\begin{eqnarray}
  \sup_{\bs \lambda \in \Lambda} U_n(k,\bs \lambda) &\leq& \sum_{\sigma\in\{1\cdots p\}^k}  \int_{\Delta_{\sigma(1),n}× \cdots × \Delta_{\sigma(p),n}} \prod_{i=1}^k\sqrt{\sum_{j=1}^k {\cal M}_{\sigma(i)×\sigma(j)}^2}(\bs\Lambda) dx_1\cdots dx_k \nonumber\\
  &=& \sum_{\sigma\in\{1\cdots p\}^k} \prod_{i=1}^k\sqrt{\sum_{j=1}^k\left(|\Delta_{\sigma(i),n}| 
      {\cal M}_{\sigma(i)×\sigma(j)}(\bs\Lambda)\right)^2} \label{eq:unkbd}
\end{eqnarray} 
We now use Proposition~\ref{prop:ray} to bound the right-hand side.
Clearly, when $\sigma(i)=\sigma(j)$, Proposition~\ref{prop:ray}
implies that $|\Delta_{\sigma(i),n}|{\cal M}_{\sigma(i)×
  \sigma(i),n}(\bs\Lambda)\leq R_{\bs\Lambda}^{-1} \Delta_{\max}$, where
$\Delta_{\max}=\max_{1\leq i\leq p} |\Delta_i|$. This inequality still
holds when $\sigma(i)\neq \sigma(j)$ as a simple application of
Proposition~\ref{prop:ray}.  Therefore,
$$
\sup_{\bs\lambda\in \bs \Lambda} U_n(k,\bs\lambda) \leq \sum_{\sigma\in\{1,\cdots, p\}^k}
k^{\frac{k}{2}}\Delta_{\max}^k R_{\bs\Lambda}^{-k} =
\left(\frac{p\,\Delta_{\max}\sqrt{k}}{R_{\bs\Lambda}}\right)^k\:.
$$
Using this inequality, it is straightforward to show that the serie
$\sum_k \frac{\sup_{\bs\lambda\in \bs\Lambda}U_n(k,\bs\lambda)}{k!}z^k$ converges for 
every $z\in\mC$ and every compact set $\bs\Lambda$. Parts 1) and 2) of the proposition are proved. 
Based on the definition of $D_{n}(z,\bs\lambda)$ and $D_{i,n}(z,\lambda_i)$, we
obtain:
$$
\max\left(|D_{n}(z,\bs\lambda)-1|,|D_{i,n}(z,\lambda_i)-1|,\ 1\leq i\leq p \right) \leq 
|z|\sum_{k=1}^\infty \frac{|z|^{k-1}}{k!} U_n(k,\bs\lambda)\:,
$$
which completes the proof of Proposition~\ref{prop:DvoisZero}.

\bibliography{math}
\end{document}